\theoremstyle{plain}                              
\newtheorem{thm}{Theorem}[section]
\newtheorem{prop}[thm]{Proposition}
\newtheorem{lem}[thm]{Lemma}
\newtheorem{cor}[thm]{Corollary}
\newtheorem*{mainthm}{Theorem}               
\theoremstyle{definition}                         
\newtheorem*{remark}{Remarks} 
\theoremstyle{remark}                             
\numberwithin{equation}{section}
\newcommand{\R}{\mathbb{R}}                     
\newcommand{\Z}{\mathbb{Z}}                     
\newcommand{\ms}[1]{\mathscr{#1}}               
\newcommand{\veps}{\varepsilon}        
\newcommand{\del}{\partial}
\providecommand{\norm}[1]{\left\lVert#1\right\rVert}       
\providecommand{\abs}[1]{\left\lvert#1\right\rvert}        
\begin{document}

\title[Cross sections]{Global cross sections for Anosov flows}


\author[S. N. Simi\'c]{Slobodan N. Simi\'c}

\address{Department of Mathematics and Statistics, San Jos\'e State University, San
  Jos\'e, CA 95192-0103}



\email{simic@math.sjsu.edu}


\subjclass{}
\date{\today}
\dedicatory{}
\keywords{}



\begin{abstract}
  We provide a new criterion for the existence of a global cross
  section to a volume-preserving Anosov flow. The criterion is
  expressed in terms of expansion and contraction rates of the flow
  and is more general than the previous results of similar kind.
\end{abstract}

\maketitle





\section{Introduction}

Henri Poincar\'e introduced the idea of a cross section to a flow to
study the 3-body problem. A global cross section to a flow $\Phi$ on a
manifold $M$ is a codimension one submanifold $\Sigma$ of $M$ such
that $\Sigma$ intersects every orbit of $\Phi$ transversely. It is
natural to ask whether any given non-singular flow admits one.

If $\Sigma$ is a global cross section for $\Phi$, it is not hard to
check that every orbit which starts on $\Sigma$ returns to $\Sigma$
after some positive time, defining the Poincar\'e first-return map $g
: \Sigma \to \Sigma$. The analysis of $\Phi$ can then be reduced to
the study of the map $g$, which in principle can be an easier
task. The flow can be reconstructed from the Poincar\'e map by
suspending it (cf., ~\cite{katok+95}).

The object of this paper is to investigate the existence of global
cross sections to volume-preserving Anosov flows.

Recall that a non-singular flow $\Phi = \{ f_t \}$ on a closed
(compact and without boundary) Riemannian manifold $M$ is called
\textsf{Anosov} if there exists an invariant splitting $TM = E^{ss} \oplus E^c
\oplus E^{uu}$ of the tangent bundle of $M$ and uniform constants $c >
0$, $0 < \mu_- \leq \mu_+ < 1$ and $\lambda_+ \geq \lambda_- > 1$ such
that the \textsf{center bundle} $E^c$ is spanned by the infinitesimal
generator $X$ of the flow and for all $v \in E^{ss}$, $w \in E^{uu}$,
and $t \geq 0$, we have
\begin{equation}      \label{eq:ss}
  \frac{1}{c} \mu_-^t \norm{v} \leq \norm{Tf_t (v)} \leq c \mu_+^t \norm{v},
\end{equation}
and
\begin{equation}   \label{eq:uu}
  \frac{1}{c} \lambda_-^t \norm{w} \leq \norm{Tf_t (v)} \leq c \lambda_+^t \norm{w},
\end{equation}
where $Tf_t$ denotes the derivative (or tangent map) of $f_t$. We call
$E^{ss}$ and $E^{uu}$ the \textsf{strong stable} and \textsf{strong
  unstable bundles}; $E^{cs} = E^c \oplus E^{ss}$ and $E^{cu} = E^c
\oplus E^{uu}$ are called the \textsf{center stable} and
\textsf{center unstable bundles}. It is well-known
\cite{hps77,hassel+94} that all of them are H\"older continuous and
uniquely integrable \cite{anosov+67}. The corresponding foliations
will be denoted by $W^{ss}, W^{uu}, W^{cs}$, and $W^{cu}$. They are
also H\"older continuous in the sense that each one admits H\"older
foliation charts. This means that if $W^\sigma$ ($\sigma \in \{ss, uu,
cs, cu \}$) is $C^\theta$, then every point in $M$ lies in a
$C^\theta$ chart $(U,\varphi)$ such that in $U$ the local
$W^\sigma$-leaves are given by $\varphi_{k+1} = \text{constant},
\ldots, \varphi_n = \text{constant}$, where $\varphi =
(\varphi_1,\ldots,\varphi_n)$ is a $C^\theta$ homeomorphism and $k$ is
the dimension of $W^\sigma$. The leaves of all invariant foliations
are as smooth as the flow. See also \cite{psw+97} for a discussion of
regularity of H\"older foliations.


\noindent
\paragraph{\textbf{Related work.}}
\label{sec:textbfprevious-work}
The first results on the existence of global cross sections to Anosov
flows were proved by Plante in \cite{plante72}. He showed that if
$E^{ss} \oplus E^{uu}$ is a uniquely integrable distribution or
equivalently, if the foliations $W^{ss}$ and $W^{uu}$ are jointly
integrable\footnote{This means that locally speaking the
  $W^{uu}$-holonomy between local $W^{cs}$-leaves takes local
  $W^{ss}$-leaves to $W^{ss}$-leaves.}, then the Anosov flow admits a
global cross section. Sharp \cite{sharp+93} showed that a transitive
Anosov flow admits a global cross section if it is not homologically
full; this means that for every homology class $\alpha \in H_1(M,\Z)$
there is a closed $\Phi$-orbit $\gamma$ whose homology class equals
$\alpha$. (This is equivalent to the condition that there is \emph{no}
fully supported $\Phi$-invariant ergodic probability measure whose
asymptotic cycle in the sense of Schwartzman~\cite{schwartz+57} is
trivial.) Along different lines Bonatti and Guelman~\cite{bg+09}
showed that if the time-one map of an Anosov flow can be $C^1$
approximated by Axiom A diffeomorphisms, then the flow is
topologically equivalent to the suspension of an Anosov
diffeomorphism.

Let $n_s = \dim E^{ss}$ and $n_u = \dim E^{uu}$. If $n_u = 1$ or $n_s
= 1$, the Anosov flow is said to be of codimension one. In the
discussion that follows we always assume $n_u = 1$. In \cite{ghys89}
Ghys proved the existence of global cross sections for codimension one
Anosov flows in the following cases: (1) if $E^{su} = E^{ss} \oplus
E^{uu}$ is $C^1$ and $n \geq 4 $ (in this case the global cross
section has constant return time); if (2) the flow is
volume-preserving, $n \geq 4$ and $W^{cs}$ is of class $C^2$. This was
generalized by the author in \cite{sns+96} and \cite{sns+97} where we
showed that a codimension one Anosov flow admits a global cross
section if any of the following assumptions is satisfied: (1) $E^{su}$
is Lipschitz (in the sense that it is locally spanned by Lipschitz
vector fields) and $n \geq 4$; (2) the flow is volume-preserving, $n
\geq 4$, and $E^{su}$ is $C^\theta$-H\"older for \emph{all} $\theta <
1$ (3) the flow is volume-preserving, $n \geq 4$, and $E^{cs}$ is of
class $C^{1 + \theta}$ for \emph{all} $\theta < 1$. Note that all the
regularity assumptions above require that the invariant bundles be
smoother than they usually are: $E^{su}$ is generically only H\"older
continuous and in the codimension one case, $E^{cs}$ is generically
only $C^{1 + \theta}$ for some small $0 < \theta < 1$. See
\cite{hassel+94, hassel+97} and \cite{hasselblatt-wilkinson+99}.

The goal of this paper is to establish the following result.

\begin{mainthm}
  Let $\Phi = \{ f_t \}$ be a volume-preserving Anosov flow on a
  closed Riemannian manifold $M$ and let $0 < \theta \leq 1$ be the smaller
  of the H\"older exponents of $W^{ss}$ and $W^{cs}$. If
  \begin{equation}     \label{eq:mainthm}
    \mu_+^{(n_s-1)\theta} \lambda_+^{(n_u-1)\theta} < \mu_-^{2(1-\theta)},
  \end{equation}
  then $\Phi$ admits a global cross section.
\end{mainthm}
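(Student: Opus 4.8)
The plan is to construct a continuous closed $1$-form $\omega$ on $M$ with $\omega(X)>0$ everywhere; any such $\omega$ produces a global cross section by a standard argument (de~Rham regularization gives a smooth closed $1$-form $\omega_\varepsilon$ with $\omega_\varepsilon\to\omega$ uniformly, hence $\omega_\varepsilon(X)>0$ for small $\varepsilon$; after perturbing $[\omega_\varepsilon]$ to a rational class one integrates $\omega_\varepsilon$ to a submersion $M\to\R/\Z$ whose fibres are transverse to $\Phi$ and meet every orbit; alternatively $\ker\omega=E^{ss}\oplus E^{uu}$ is then uniquely integrable and Plante's theorem \cite{plante72} applies). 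After passing to a finite cover — which does not affect the existence of a cross section — we may assume $E^{cs}$ and $E^{cu}$ are coorientable, and take $\omega$ to be the \emph{canonical $1$-form} of $\Phi$: the unique continuous $1$-form with $\ker\omega=E^{ss}\oplus E^{uu}$ and $\omega(X)\equiv 1$. Since $Tf_t$ preserves $E^{ss}\oplus E^{uu}$ and fixes $X$, one checks at once that $f_t^\ast\omega=\omega$ for all $t$; differentiating gives $\mathcal L_X\omega=0$ and hence $\iota_X\,d\omega=0$ (in the sense of currents). Moreover $\omega$ inherits the $C^\theta$ regularity of the invariant bundles. The whole problem reduces to proving $d\omega=0$.

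Because $\omega$ restricts to $0$ on every leaf of $W^{ss}$ and of $W^{uu}$, and to the differential of the flow-time function on every leaf of $W^{cs}$ and of $W^{cu}$, closedness of $\omega$ is equivalent to the vanishing of the \emph{temporal defect} of small $W^{ss}$--$W^{uu}$ rectangles, i.e.\ to the joint integrability recalled in the footnote above: for nearby local leaves $L_1,L_2$ of $W^{cs}$ separated in the $W^{uu}$-direction by an amount $\delta$, the $W^{uu}$-holonomy $H:L_1\to L_2$ should carry the $W^{ss}$-subfoliation of $L_1$ to that of $L_2$. Writing the $W^{ss}$-subfoliation of $L_i$ as the level sets of the (smooth) flow-time function $t_i$, the defect is $\psi=t_2\circ H-t_1$ on $L_1$, and $d\omega=0$ precisely when every such $\psi$ is constant along $W^{ss}$-leaves. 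Two structural features are key: $\psi$ is \emph{flow-equivariant} — conjugating the configuration by $f_t$ replaces $\psi$ by $\psi\circ f_{-t}$, and $f_t$ sends $W^{ss}$-leaves to $W^{ss}$-leaves — and $\psi$ is $C^\theta$ with $\|\psi\|_{C^0}$ of the order of $\delta$; the $C^\theta$ charts of $W^{ss}$ and $W^{cs}$ give quantitative control on the variation of $\psi$ across $W^{ss}$-leaves in terms of $\delta$, the stable distance traversed, and $\theta$.

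The heart of the matter is a quantitative estimate exploiting both features simultaneously. Given $x,y$ on a common $W^{ss}$-leaf of $L_1$ at stable distance $\rho$, one wants $\psi(x)=\psi(y)$. Applying $f_{-t}$ ($t>0$) brings $f_{-t}(L_1)$ and $f_{-t}(L_2)$ together: their $W^{uu}$-separation shrinks like $\lambda_-^{-t}\delta$, while the images of $x,y$ move apart to stable distance $\asymp\mu_-^{-t}\rho$. By flow-equivariance, $|\psi(x)-\psi(y)|$ is the variation of the defect of the new configuration over this enlarged stable distance, which the $C^\theta$ control bounds in terms of $\mu_-^{-t}\rho$, $\lambda_-^{-t}\delta$ and $\theta$. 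At this point volume-preservation enters decisively: the invariant volume forces the stable and unstable Jacobians of $Tf_t$ to have reciprocal growth, which lets one absorb the ``bad'' expanding factors $\mu_-^{-\theta t}$ against the unstable Jacobian and the remaining transverse directions; here the dimensions appear, since the flow direction is neutral, only $n_s-1$ of the stable and $n_u-1$ of the unstable directions enter the relevant Jacobians. Carrying out this accounting (working with $H$ as a map between the full $W^{cs}$-leaves and estimating its Jacobians, rather than with a single rectangle) bounds $|\psi(x)-\psi(y)|$ by a constant, independent of $t$, times
\[
\bigl(\mu_+^{(n_s-1)\theta}\,\lambda_+^{(n_u-1)\theta}\,\mu_-^{-2(1-\theta)}\bigr)^{t},
\]
which tends to $0$ as $t\to+\infty$ exactly by hypothesis \eqref{eq:mainthm}. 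Hence $\psi(x)=\psi(y)$; every temporal defect is constant along $W^{ss}$-leaves; $E^{ss}\oplus E^{uu}$ is integrable; and $\Phi$ admits a global cross section.

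\textbf{The main obstacle} is the estimate of the previous paragraph. Two points make it delicate. First, $\psi$ is merely $C^\theta$, not $C^1$, so ``variation across $W^{ss}$-leaves'' must be handled entirely through the H\"older moduli of the $W^{ss}$- and $W^{cs}$-foliation charts, and one must be careful that this is the \emph{only} place where regularity below $C^1$ costs anything. Second, and more seriously, one must perform the volume-preservation bookkeeping so that precisely the combination $\mu_+^{(n_s-1)\theta}\lambda_+^{(n_u-1)\theta}$ versus $\mu_-^{2(1-\theta)}$ appears: this forces one to track the Jacobian of the holonomy $H$ on all $n_s+1$ dimensions of a $W^{cs}$-leaf and to use that volume-preservation pins the determinant of $Tf_t$ on the non-neutral stable and unstable directions to the inverse of the remaining expansion. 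A secondary, more routine, technical point is to verify carefully that $\psi$ is well defined and additive over concatenated rectangles, that $d\omega=0$ really is equivalent to all such $\psi$ being $W^{ss}$-constant, and that small null-homotopic loops reduce to (products of) such rectangles.
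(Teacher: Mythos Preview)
Your plan is genuinely different from the paper's. The paper never tries to prove $d\omega=0$ for the canonical form directly. Instead it builds, for each $\varepsilon>0$, a \emph{smooth} $1$-form $\xi_0^\varepsilon$ with $\xi_0^\varepsilon(X)=1$ --- essentially an $\varepsilon$-mollification of your $\omega$, obtained from smooth local cross sections approximating the H\"older ones --- satisfying $\norm{d\xi_0^\varepsilon\!\restriction_{E^{cs}}}\le D\varepsilon^\theta$ and $\norm{d\xi_0^\varepsilon}\le K\varepsilon^{\theta-1}$. It then sets $\xi_t^\varepsilon=f_{-t}^\ast\xi_0^\varepsilon$ and uses volume-preservation in one sharp place: for $v\in E^{ss}$, $w\in E^{uu}$ one has $\norm{Tf_{-t}(v\wedge w)}\lesssim(\mu_+^{n_s-1}\lambda_+^{n_u-1})^t$, which controls the $E^{ss}\times E^{uu}$ block of $d\xi_t^\varepsilon$. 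Balancing the two parameters $\varepsilon$ and $t$ to make $\norm{d\xi_t^\varepsilon}<1$ is exactly where \eqref{eq:mainthm} appears; a separate proposition then produces a nearby \emph{closed} form $\eta$ with $\eta(X)>0$.

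Your proposal has a real gap precisely where you flag the ``main obstacle'': the displayed bound $|\psi(x)-\psi(y)|\le C\bigl(\mu_+^{(n_s-1)\theta}\lambda_+^{(n_u-1)\theta}\mu_-^{-2(1-\theta)}\bigr)^t$ is asserted, not derived, and I do not see how to obtain it along the lines you sketch. Two concrete obstructions. First, after applying $f_{-t}$ the points $f_{-t}x,f_{-t}y$ sit at stable distance $\asymp\mu_-^{-t}\rho\to\infty$, so you have left every fixed H\"older chart; a single $C^\theta$ estimate is unavailable, and subdividing the long stable arc reintroduces a factor that grows with $t$. Second, the volume-preservation input in the paper is a \emph{$2$-vector} estimate on $Tf_{-t}(v\wedge w)$, tailor-made to act on the $2$-form $d\xi$; there is no evident way for this to act on the scalar defect $\psi$, and ``estimating the Jacobians of $H$'' is too vague (the holonomy $H$ is only $C^\theta$) to produce the specific exponents $(n_s-1)\theta$, $(n_u-1)\theta$, $2(1-\theta)$. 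In the paper those exponents arise because mollifying a $C^\theta$ object at scale $\varepsilon$ moves values by $\varepsilon^\theta$ and derivatives by $\varepsilon^{\theta-1}$, and one then optimizes over the \emph{two} free parameters $(\varepsilon,t)$; your scheme, for a fixed configuration $(x,y,\delta,\rho)$, has only the single parameter $t$, which is why the bookkeeping does not close. If you want to rescue the temporal-defect viewpoint, you will almost certainly need to introduce a second small parameter (a smoothing scale) and you will then find yourself recreating the paper's argument.
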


\begin{remark}
  \begin{itemize}
  \item[(a)] The condition \eqref{eq:mainthm} has a chance of being
    satisfied only if $n_u$ is much smaller than $n_s$. If $n_u >
    n_s$, then by reversing time it is easy to show that
    \begin{displaymath}
      \lambda_+^{2(1-\alpha)} < \lambda_-^{(n_u-1) \alpha} \mu_-^{(n_s - 1) \alpha}
    \end{displaymath}
    also implies the existence of a global cross section, where
    $\alpha$ is the minimum of the H\"older exponents of $E^{uu}$ and $E^{cu}$.

  \item[(b)] If the flow is of codimension one with $n_u = 1$, then
    \eqref{eq:mainthm} reduces to
    \begin{equation}    \label{ref:codim}
      \mu_+^{(n-3)\theta} < \mu_-^{2(1-\theta)}.
    \end{equation}
    It is well-known (cf., \cite{hassel+94} and \cite{hps77}) that the center stable bundle $E^{cs}$ and strong
    unstable bundle $E^{uu}$ of a volume-preserving Anosov flow in
    dimensions $n \geq 4$ are both $C^{1 + \text{H\"older}}$. Thus if
    $E^{su}$ is Lipschitz as in \cite{sns+96} or $C^\theta$, for all
    $\theta < 1$, as in \cite{sns+97}, then \eqref{ref:codim} is
    clearly satisfied. If $E^{cs}$ is $C^{1+\theta}$ for all $\theta <
    1$ as in \cite{sns+97}, then it is not hard to show that $E^{ss}$
    is necessarily of class $C^\theta$ for all $\theta < 1$, which
    again implies \eqref{ref:codim}. Therefore, in the case of
    volume-preserving codimension one Anosov flows, our result implies all the
    previously known criteria for the existence of global cross
    sections.

  \item[(c)] In the early 1970's, prompted by a dearth of examples,
    A. Verjovsky conjectured~\cite{verj74} that every codimension one
    Anosov flow in dimensions $n \geq 4$ admits a global cross
    section. The importance of Verjovsky's conjecture stems from the
    fact that codimension one Anosov diffeomorphisms were classified
    by Franks~\cite{franks70} and Newhouse~\cite{newhouse70} who
    showed that every such diffeomorphism is topologically conjugate
    to a linear hyperbolic automorphism of a torus. Therefore, the
    affirmation of Verjovsky's conjecture would yield a complete
    classification of codimension one Anosov flows in dimensions $n
    \geq 4$.

    Progress towards Verjovsky's conjecture was made in the early
    1980's by Plante~\cite{plante81,plante83} and
    Armendariz~\cite{armendariz} who showed that the conjecture holds
    if the fundamental group of the manifold is solvable. By the work
    of Asaoka~\cite{asaoka+08} it follows that it suffices to prove
    the conjecture for volume-preserving flows, since any
    topologically transitive codimension one Anosov flow is
    topologically equivalent to a volume-preserving one. As of this
    writing, the conjecture remains open.

   \end{itemize}
\end{remark}

Throughout this paper smooth will mean of class $C^\infty$. \\

\noindent
\paragraph{\textbf{Outline of the proof.}}
The main idea of the proof of the theorem is to find a smooth closed
1-form $\eta$ such that $\eta(X) > 0$, where $X$ is the infinitesimal
generator of the Anosov flow. It is not hard to see that this
immediately implies the existence of a global cross section (cf.,
\S\ref{sec:proof}). To construct $\eta$, we use the fact that for any
$k$-form $\xi$, the $C^0$-distance from $\xi$ to the space of closed
$k$-forms is bounded above by the $C^0$ norm $\norm{d\xi}$; see
Proposition~\ref{prop:forms}. It therefore suffices to construct a
smooth 1-form $\xi$ such that $\abs{\xi(X_p)} > \norm{d\xi}$, for all
$p \in M$, since Proposition~\ref{prop:forms} then yields a smooth
closed 1-form $\eta$ such that $\eta(X) > 0$. We will actually
construct a smooth 1-form $\xi$ such that $\xi(X) = 1$ and
$\norm{d\xi} < 1$.

The construction of $\xi$ is divided into two steps. In the first
step, we find an initial candidate for $\xi$ such that the norm of its
exterior derivative restricted to $E^{cs}$ is small, while its total
norm blows up in a way controlled by the H\"older exponent
$\theta$. More precisely, for each $\veps > 0$ we construct a smooth
1-form $\xi_0^\veps$ on $M$ such that $\xi_0^\veps(X) = 1$,
$\norm{d\xi_0^\veps \!  \restriction_{E^{cs}}} \leq D \veps^\theta$,
and $\norm{d \xi_0^\veps} \leq K \veps^{\theta-1}$, where $D$ and $K$
are positive constants independent of $\veps$. This is achieved by
carefully building smooth local cross sections and the corresponding
flow boxes; cf., \S\ref{sec:local-sections}.

In the second step, we pull back $\xi_0^\veps$ by $f_{-t}$ for
suitable $t > 0$ to make the norm of its exterior derivative small in
the remaining directions. For this, we use an estimate (see
Lemma~\ref{lem:backward}) on the growth of $\norm{Tf_{-t}(v \wedge
  w)}$, for $t > 0$, $v \in E^{ss}$ and $w \in E^{uu}$. Assuming
\eqref{eq:mainthm}, we then show that there exist $\veps > 0$ and $t >
0$ such that $\xi = f_{-t}^\ast \xi_0^\veps$ has the desired properties.

\section{Preliminaries}
\label{sec:preliminaries}

\subsection{Regularization}
\label{sec:reg}

Here we recall a standard technique for approximating locally
integrable functions by smooth ones called
\textsf{regularization} or \textsf{mollification} (see
\cite{evans+98,stein+70}). Define the standard mollifier $\eta : \R^n
\to \R$ by
\begin{displaymath}
      \eta(x) = \begin{cases}
      A_0 \exp \left( \frac{1}{\abs{x}^2 - 1} \right) & 
      \text{if $\abs{x} < 1$} \\
      0 & \text{if $\abs{x} \geq 1$},
      \end{cases}
\end{displaymath}
where $A_0$ is chosen so that $\int \eta \: dx = 1$. For every $\veps
> 0$, set $\eta_\veps(x) = \veps^{-n} \eta(x/\veps)$. Note that the
support of $\eta_\veps$ is contained in the ball $B(0,\veps)$ of
radius $\veps$ centered at the origin and that $\int \eta_\veps \: dx
= 1$. For a locally integrable function $u : \R^n \to \R$ define
\begin{displaymath}
  u^\veps(x) = (u \ast \eta_\veps)(x) = \int_{\R^n} u(y)
  \eta_\veps(x-y) \: dy = \int_{\R^n} u(x-y) \eta_\veps(y) \: dy.
\end{displaymath}

\begin{prop}  \label{prop:reg}
  
  Assume that $u: \R^n \to \R$ is locally integrable. Then:
\begin{enumerate}

  \item[(a)] $u^\veps \in C^\infty(\R^n)$.
  \item[(b)] If $u \in L^\infty$, then
    $\norm{u^\veps}_{L^\infty} \leq \norm{u}_{L^\infty}$.
  \item[(c)] If $u$ is continuous, then $u^\veps \to u$, uniformly as
    $\veps \to 0$. If $u \in C^\theta$ $(0 < \theta \leq 1)$, then
    $\norm{u^\veps - u}_{C^0} \leq \norm{u}_{C^\theta} \:
    \veps^\theta$, where the $C^\theta$-norm is the sum of its
    sup-norm and its best H\"older constant.
  \item[(d)] If $u \in C^\theta$, then 
    \begin{displaymath}
      \norm{du^\veps} \leq
      \norm{d\eta}_{L^1} \norm{u}_{C^\theta} \veps^{\theta-1}
    \end{displaymath}
    where $\norm{d\eta}_{L^1} = \max_i \int_{\R^n} \abs{\del \eta/\del
      x_i} dx$ and $\norm{du^\veps}$ denotes the maximum of the
    sup-norms of the partial derivatives of $u^\veps$.
\end{enumerate}

\end{prop}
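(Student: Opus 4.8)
The plan is to verify the four claims in order, each reduced to the three basic properties of the mollifier: $\eta_\veps \geq 0$, $\int_{\R^n}\eta_\veps\,dx = 1$, and that the support of $\eta_\veps$ lies in $B(0,\veps)$. For (a) I would differentiate $u^\veps(x) = \int_{\R^n} u(y)\,\eta_\veps(x-y)\,dy$ under the integral sign. To justify this, fix $x$ and a coordinate direction $e_i$; for $0 < |h| \le 1$ the difference quotients $h^{-1}\big(\eta_\veps(x+he_i-y)-\eta_\veps(x-y)\big)$ converge pointwise in $y$ to $(\partial_i\eta_\veps)(x-y)$ and, by the mean value theorem, are bounded in absolute value by $\norm{\nabla\eta_\veps}_{L^\infty}$ times the indicator of the fixed compact set $\{y : |y-x| \le \veps+1\}$; since $u$ is locally integrable, dominated convergence gives $\partial_i u^\veps(x) = \int u(y)(\partial_i\eta_\veps)(x-y)\,dy$. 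Iterating yields $D^\alpha u^\veps = u \ast D^\alpha\eta_\veps \in C^0$ for every multi-index $\alpha$, so $u^\veps \in C^\infty$.

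Claims (b) and (c) are then immediate from positivity and unit mass of $\eta_\veps$ together with the change of variables $u^\veps(x) = \int u(x-y)\eta_\veps(y)\,dy$. For (b), $|u^\veps(x)| \le \int |u(x-y)|\,\eta_\veps(y)\,dy \le \norm{u}_{L^\infty}$. For (c), write $u^\veps(x) - u(x) = \int\big(u(x-y)-u(x)\big)\eta_\veps(y)\,dy$; since only $|y| < \veps$ contributes, continuity of $u$ (uniform on the relevant compacta, which is all we need here) makes the integrand uniformly small as $\veps \to 0$, and if $u \in C^\theta$ with best H\"older constant $H$ then $|u(x-y)-u(x)| \le H|y|^\theta \le H\veps^\theta$ on the support of $\eta_\veps$, so $\norm{u^\veps - u}_{C^0} \le H\veps^\theta \le \norm{u}_{C^\theta}\veps^\theta$.

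For (d) I would combine the formula from (a) with the cancellation $\int(\partial_i\eta_\veps)(x-y)\,dy = \int\partial_i\eta_\veps(w)\,dw = 0$ (a compactly supported derivative integrates to zero). Subtracting $u(x)$ times this vanishing integral and setting $w = x-y$,
\[
  \partial_i u^\veps(x) = \int_{\R^n}\big(u(x-w)-u(x)\big)\,\partial_i\eta_\veps(w)\,dw ,
\]
whence $|\partial_i u^\veps(x)| \le H\veps^\theta\int|\partial_i\eta_\veps(w)|\,dw$ because $|w| < \veps$ on the support. The scaling identity $\partial_i\eta_\veps(w) = \veps^{-n-1}(\partial_i\eta)(w/\veps)$ gives $\int|\partial_i\eta_\veps(w)|\,dw = \veps^{-1}\int_{\R^n}|\partial_i\eta|\,dw$, and taking the maximum over $i$ produces $\norm{du^\veps} \le \norm{d\eta}_{L^1}\norm{u}_{C^\theta}\veps^{\theta-1}$.

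The only step that requires genuine care is the differentiation under the integral sign in (a) — everything else is a one-line estimate using $\eta_\veps \geq 0$, $\int\eta_\veps = 1$, and the support condition. Once (a) is established, part (d) is essentially the same computation decorated with the cancellation trick that replaces $u(x-w)$ by $u(x-w)-u(x)$ so the H\"older bound can be applied, and parts (b)--(c) follow directly. (If one insists on uniform convergence in (c) over all of $\R^n$ for merely continuous $u$, one needs $u$ uniformly continuous, which holds automatically in our application since the functions being mollified come from a compact manifold.)
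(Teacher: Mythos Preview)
Your proof is correct and follows essentially the same approach as the paper: the paper refers to \cite{evans+98} for (a), (b), and the first half of (c), then carries out exactly the same computations you give for the H\"older estimate in (c) and for (d), including the cancellation trick $\int \partial_i\eta_\veps = 0$ and the scaling identity $\partial_i\eta_\veps(w) = \veps^{-n-1}(\partial_i\eta)(w/\veps)$. Your explicit dominated-convergence justification for (a) and your remark on uniform continuity are welcome additions but do not change the strategy.
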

\begin{proof}
  
  Proofs of (a), (b) and the first part of (c) can be found in
  \cite{evans+98}. For the second part of (c), we
  have
  \begin{align*}
    \abs{u^\veps(x) - u(x)} & = \left\lvert \int_{B(0,\veps)} 
      \eta_\veps(y) [u(x-y) - u(x)] \, dy \right\rvert \\
    & \leq \norm{u} \veps^\theta \int_{B(0,\veps)}
    \eta_\veps(y) \, dy \\
      & = \norm{u}_{C^\theta} \veps^\theta.
  \end{align*}
  If $u \in C^1$, then the same estimates hold with $\theta$ replaced
  by $1$. 
  
  Observe that since $\eta_\veps$ has compact support,
  \begin{equation}                    \label{eq:compact}
    \int_{\R^n} \frac{\del \eta_\veps}{\del x_i}(y) \, dy = 0,
  \end{equation}
  for $1 \leq i \leq n$. Note also that

  \begin{equation*}
    \frac{\del \eta_\veps}{\del x_i}(x) = \frac{1}{\veps^{n+1}} 
    \frac{\del \eta}{\del x_i}\left( \frac{x}{\veps} \right).
  \end{equation*}
  Assuming $u \in C^\theta$, we obtain (d):
  \begin{align*}
    \left\lvert \frac{\del u^\veps}{\del x_i}(x) \right\rvert & =
    \left\lvert \int_{\R^n} u(x-y) \frac{\del \eta_\veps}{\del x_i}(y)
      \, dy \right\rvert \\
    & \overset{\text{by} \ \eqref{eq:compact}}{=} \left\lvert
      \int_{B(0,\veps)} [u(x-y) - u(x)] \frac{\del \eta_\veps}{\del
        x_i}(y) \, dy \right\rvert,  \\
    & \leq \norm{u}_{C^\theta} \, \veps^\theta \int_{B(0,\veps)}
      \left\lvert \frac{\del \eta_\veps}{\del x_i}(y) \right\rvert \, dy  \\
      & = \norm{u}_{C^\theta} \, \veps^\theta \int_{B(0,\veps)}
      \frac{1}{\veps^{n+1}} \left\lvert \frac{\del \eta}{\del
          x_i}\left(\frac{y}{\veps}\right)
      \right\rvert \, dy  \\
      & \overset{z = \frac{y}{\veps}}{=} \norm{u}_{C^\theta} \,
      \veps^\theta \cdot \frac{1}{\veps} \int_{B(0,1)} \left\lvert
        \frac{\del \eta}{\del x_i}(z) \right\rvert \, dz,  \\
    & \leq \norm{d\eta}_{L^1} \norm{u}_{C^\theta} \, \veps^{\theta-1}. \qedhere
  \end{align*}
\end{proof}

\begin{cor}   \label{cor:mollify}
  Let $M$ be a compact manifold without boundary. Fix a finite atlas
  $\ms{A} = \{ (U_i,\varphi_i) : i \in I \}$ of $M$. If $u : U \to \R$
  is $C^\theta$ and $U \subset U_j$ for some $j \in I$, then there
  exist $\veps_\ast > 0$ and a family of smooth approximations
  $u^\veps$ $(0 < \veps < \veps_\ast)$ of $u$ such that:

  \begin{enumerate}

  \item[(a)] $u^\veps$ is defined on $U^\veps \subset U$ where
    $U^\veps = \varphi_j^{-1}\{ x \in \varphi_j(U) : d(x, \del
    \varphi_j(U)) > \veps \}$.

  \item[(b)] $\norm{u^\veps - u}_{C^0} \leq \kappa \veps^\theta$.

  \item[(c)] $\norm{du^\veps}_{C^0} \leq \kappa \veps^{\theta - 1}$.

  \end{enumerate}

  Here $\kappa > 0$ depends only on $\ms{A}$ and
  $\norm{u}_{C^\theta}$. 
\end{cor}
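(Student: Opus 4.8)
The plan is to reduce the corollary to Proposition~\ref{prop:reg} by carrying out the construction inside the single chart $(U_j,\varphi_j)$ that contains $U$. Because $\ms{A}$ is finite and $M$ is compact, the chart maps $\varphi_i$ and their inverses have uniformly bounded derivatives, so there is a constant $L=L(\ms{A})\ge 1$ with each $\varphi_i$ and each $\varphi_i^{-1}$ $L$-Lipschitz. I would first record the effect of passing to the chart: setting $v:=u\circ\varphi_j^{-1}\colon\varphi_j(U)\to\R$ one has $\norm{v}_{C^0}=\norm{u}_{C^0}$, and since $\varphi_j^{-1}$ is $L$-Lipschitz the best H\"older constant of $v$ is at most $L^\theta$ times that of $u$ (the only pairs that will matter are $x$ and $x-y$ with $B(x,\veps)\subset\varphi_j(U)$, for which the segment joining them lies in $\varphi_j(U)$). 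Hence $\norm{v}_{C^\theta}\le L\,\norm{u}_{C^\theta}$.

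The second step is to mollify $v$ inside its own domain. Choose $\veps_\ast>0$ so that
\begin{displaymath}
  V^\veps:=\bigl\{\,x\in\varphi_j(U):d\bigl(x,\del\varphi_j(U)\bigr)>\veps\,\bigr\}
\end{displaymath}
is non-empty for $0<\veps<\veps_\ast$ (for instance $\veps_\ast=\sup\{r:\varphi_j(U)\text{ contains a ball of radius }r\}$), and put $v^\veps:=v\ast\eta_\veps$ on $V^\veps$. For $x\in V^\veps$ the ball $B(x,\veps)$ lies in $\varphi_j(U)$, so $v^\veps$ is well defined on $V^\veps$ and, by differentiation under the integral sign, smooth there. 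The key point is that the bounds in the proof of Proposition~\ref{prop:reg}(c),(d) are \emph{local}: the estimate at a point $x$ uses only the values of the function on $B(x,\veps)$. Applying them to $v$ therefore yields, throughout $V^\veps$,
\begin{displaymath}
  \norm{v^\veps-v}_{C^0}\le\norm{v}_{C^\theta}\,\veps^{\theta}
  \qquad\text{and}\qquad
  \norm{dv^\veps}_{C^0}\le\norm{d\eta}_{L^1}\,\norm{v}_{C^\theta}\,\veps^{\theta-1}.
\end{displaymath}

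Finally I would transplant back: put $u^\veps:=v^\veps\circ\varphi_j$, which is smooth on $U^\veps:=\varphi_j^{-1}(V^\veps)$, and this is exactly the set in~(a). Since $\varphi_j$ is a diffeomorphism, $\norm{u^\veps-u}_{C^0}=\norm{v^\veps-v}_{C^0}$, so the first estimate above gives~(b). For~(c), the chain rule gives $d(u^\veps)|_p=d(v^\veps)|_{\varphi_j(p)}\circ D\varphi_j|_p$ for $p\in U^\veps$; since $\norm{D\varphi_j}\le L$, the $C^0$-norm of the $1$-form $du^\veps$ (in the fixed metric on $M$) is bounded by a constant depending only on $n$ and $L$ times $\norm{dv^\veps}_{C^0}$, and the second estimate above gives~(c). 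Collecting the constants and using $\norm{v}_{C^\theta}\le L\,\norm{u}_{C^\theta}$ produces $\kappa=\kappa(\ms{A})\,\norm{u}_{C^\theta}$ as claimed. I expect the only point needing care to be the uniform bi-Lipschitz control of the charts of $\ms{A}$ invoked in the first step: on a compact manifold this is routine --- if necessary one first passes to a refinement of $\ms{A}$ whose charts extend smoothly past the closures of their domains --- but it is exactly what forces $\kappa$ to depend on $\ms{A}$ alone rather than also on $U$.
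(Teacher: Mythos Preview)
Your proposal is correct and follows exactly the paper's approach: define $u^\veps = [(u\circ\varphi_j^{-1})\ast\eta_\veps]\circ\varphi_j$ and read off the estimates from Proposition~\ref{prop:reg}. You have simply spelled out the details (bi-Lipschitz control of the charts, locality of the mollifier estimates, and the chain rule for transferring back) that the paper leaves implicit in its one-line proof.
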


\begin{proof}
  The family $u^\veps = [(u \circ \varphi_j^{-1}) \ast \eta_\veps]
  \circ \varphi_j$ has the desired properties. We can take
  $\veps_\ast$ to be any positive number such that for $\veps <
  \veps_\ast$, the sets $U^\veps$ defined above are non-empty.
\end{proof}

\subsection{Construction of local cross sections}
\label{sec:local-sections}

In this section we construct a finite covering of the manifold by smooth flow boxes defined by carefully chosen smooth local cross sections. 

Let $p \in M$ be arbitrary and choose a $W^{ss}$-foliation chart $(U,\varphi)$ containing $p$. This means that $\varphi : U \to \R^n$ is a $C^\theta$-homeomorphism such that the local $W^{ss}$-leaves in $U$ are given by
\begin{displaymath}
  \varphi_{n_s+1} = \text{constant}, \cdots, \varphi_n = \text{constant},
\end{displaymath}
where $\varphi = (\varphi_1, \ldots, \varphi_n)$. We can also arrange that the local $W^{cs}$-leaves in $U$ are defined by
\begin{displaymath}
  \varphi_{n_s+1} = \text{constant}, \cdots, \varphi_{n-1} = \text{constant},
\end{displaymath}
so that in each local $W^{cs}$-leaf in $U$ $W^{ss}$ is given by $\varphi_n = \text{constant}$. Furthermore, we can take $U$ so that its closure is contained in a \emph{smooth} chart for $M$. This condition will allow us to mollify continuous functions defined on $U$ without having to shrink the domain.

Even though $\varphi$ is only H\"older, the flow invariance of $W^{ss}$ implies that each $\varphi_i$ is differentiable with respect to $X$. Since $X$ is tangent to the $W^{cs}$ leaves, it follows that $X \varphi_i = 0$, for $n_s + 1 \leq i \leq n-1$. Furthermore, the restriction of $W^{ss}$ to $W^{cs}$-leaves is as smooth as the flow, so $\varphi_n$ is smooth on the local $W^{cs}$-leaves in $U$. Since $X$ is uniformly transverse to $W^{ss}$ it is clear that $X\varphi_n \neq 0$ and by continuity there exists $\delta > 0$ such that $\abs{X \varphi_n} \geq \delta$ on $U$.

Define 
\begin{displaymath}
  \Sigma_0 = \bigcup_{x \in W^{uu}_{\text{loc}}(p)} W^{ss}_{\text{loc}}(x),
\end{displaymath}
where $W^\sigma_{\text{loc}}(x)$ denotes the local $W^\sigma$-leaf in $U$ (for $\sigma \in \{ uu, ss \}$). Then $\Sigma_0$ is a H\"older continuous local cross section for the flow. (This makes sense, since the intersection of $\Sigma_0$ with each local $W^{cs}$-leaf is a local $W^{ss}$-leaf, hence smooth and transverse to the flow.) Let $\Sigma$ be a slightly smaller compact subset of $\Sigma_0$ containing $p$.

We claim that there exists $T > 0$, depending on $\Sigma$ and $U$, such that 
\begin{displaymath}
  V \stackrel{\text{def}}{=} \bigcup_{\abs{t} < T} f_t(\Sigma) 
\end{displaymath}
is a H\"older continuous flow box contained in $U$. Assume the contrary; it follows that the map $(t,q) \mapsto f_t(q)$ fails to be 1--1 on $(-T,T) \times \Sigma$, for any $T > 0$. Thus there exist sequences $(q_k)$ and $(T_k)$ such that $q_k \in \Sigma$, $T_k > 0$, $T_k \to 0$ and $f_{T_k}(q_k) \in \Sigma$, for all $k$. Since $f_{T_k}(q_k)$ and $q_k$ lie in the same local $W^{cs}$-leaf, it follows that $\varphi_n(f_{T_k}(q_k)) = \varphi_n(q_k)$. On the other hand, by compactness of $\Sigma$, $(q_k)$ has a subsequence $(q_{k_j})$ which converges to some $q \in \Sigma_0$. This implies $X\varphi_n(q) = 0$, which contradicts $\abs{X\varphi_n} \geq \delta$. Therefore, $T$ exists; we will call it the \emph{length} of the continuous flow box $V$ (although a more appropriate name would be half-length).

Define $\tau : V \to \R$ by
\begin{displaymath}
  \tau(f_t q) = t,
\end{displaymath}
for $q \in \Sigma$ and $\abs{t} < T$. It is clear that $\tau$ is $C^\theta$, $X\tau = 1$ and $\tau$ is constant on the local $W^{ss}$-leaves in $V$.

Next we approximate $\Sigma, \tau$ and $V$ by smooth objects with similar properties.

\begin{lem}    \label{lem:tau}
  There exists $\veps_\ast > 0$ such that for every $0 < \veps < \veps_\ast$ there exist an open set
  $V^\veps \subset V$ and a smooth function $\tau^\veps : V^\veps \to
  \R$ with the following properties:

  \begin{enumerate}

  \item[(a)] $X\tau^\veps = 1$.

  \item[(b)] $\abs{d_q \tau^\veps(v)} \leq A \veps^\theta \norm{v}$, for all $q \in V^\veps$
    and $v \in E^{ss}$, where $A$ is a constant independent of $\veps$, $q$, and $v$.

  \item[(c)] $\norm{d\tau^\veps} \leq B \veps^{\theta-1}$, where $B$
    is a constant independent of $\veps$.

  \item[(d)] The Hausdorff distance between $V$ and $V^\veps$ tends to zero, as $\veps \to 0$.

  \end{enumerate}

\end{lem}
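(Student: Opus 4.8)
The plan is to obtain $\tau^\veps$ by mollifying the $C^\theta$ function $\tau\colon V\to\R$ in a way that is compatible with the flow direction, and then to read off properties (a)--(d) from the regularization estimates in Proposition~\ref{prop:reg} and Corollary~\ref{cor:mollify}. The subtle point is that a naive mollification of $\tau$ in the ambient smooth chart will destroy the identity $X\tau=1$; one needs to mollify only in the directions transverse to $X$. So first I would straighten the flow: since $\bar U$ sits inside a smooth chart for $M$ and $X$ is a nonvanishing smooth vector field there, I can choose smooth coordinates $(s,y_1,\dots,y_{n-1})$ on a neighbourhood of $\bar\Sigma$ in which $X=\del/\del s$ and $\{s=0\}$ contains $\Sigma$. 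In these coordinates $\tau(s,y)=s+\rho(y)$ for some $C^\theta$ function $\rho$ of $y$ alone (because $X\tau=1$ forces $\del\tau/\del s\equiv 1$, and $\tau$ depends on the remaining variables only through the base point $q\in\Sigma$). Here the fact that $\tau$ is \emph{constant on the local $W^{ss}$-leaves} means $\rho$ does not depend on the coordinate corresponding to $\varphi_n$, i.e. $\rho$ is a function of the $n_s-1+n_u$ variables parametrizing $W^{uu}_{\mathrm{loc}}(p)$ and the non-$W^{ss}$ directions of $\Sigma$; I will keep track of this because it is what eventually gives the gain in direction (b).

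Next I would set $\tau^\veps(s,y)=s+\rho^\veps(y)$, where $\rho^\veps=\rho\ast\eta_\veps$ is the mollification of $\rho$ performed in the $y$-variables only (using Corollary~\ref{cor:mollify} applied to the finite atlas, so that the relevant constant $\kappa$ is uniform). Then (a) is immediate: $X\tau^\veps=\del\tau^\veps/\del s\equiv 1$. For (c), $d\tau^\veps = ds + d\rho^\veps$, and by Proposition~\ref{prop:reg}(d) / Corollary~\ref{cor:mollify}(c) we have $\norm{d\rho^\veps}\le\kappa\,\veps^{\theta-1}$, so $\norm{d\tau^\veps}\le 1+\kappa\veps^{\theta-1}\le B\veps^{\theta-1}$ for a suitable $B$ (enlarging the constant to absorb the bounded term, valid for $\veps$ small). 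For (d), $\tau^\veps\to\tau$ uniformly on the common domain by Proposition~\ref{prop:reg}(c), and the domains $V^\veps$ can be taken to exhaust $V$ as $\veps\to 0$ (shrinking the $y$-domain by $\veps$ as in Corollary~\ref{cor:mollify}(a) and then flowing); the graph-like description of $V$ and $V^\veps$ via $\tau$ and $\tau^\veps$ then makes the Hausdorff convergence a routine consequence of uniform convergence of the defining functions together with $C^0$-boundedness of $X$.

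The crux is (b): the estimate on $d\tau^\veps$ restricted to $E^{ss}$ must be $O(\veps^\theta)$, not merely the $O(\veps^{\theta-1})$ blow-up of (c). The key observation is that $\tau$ itself is \emph{exactly} constant on local $W^{ss}$-leaves, hence $d\tau$ annihilates $E^{ss}$. Writing $d\tau^\veps(v) = d\tau^\veps(v) - d\tau(v)$ for $v\in E^{ss}$ (using $d\tau(v)=0$) and then pairing against a short $W^{ss}$-path: for $q\in V^\veps$ and $v\in E^{ss}_q$, integrate $d\tau^\veps - d\tau$ along the local strong-stable leaf through $q$ over distance $\delta$, obtaining $\abs{(\tau^\veps-\tau)(q')-(\tau^\veps-\tau)(q)}\le 2\norm{\tau^\veps-\tau}_{C^0}\le 2\kappa\veps^\theta$ for any $q'\in W^{ss}_{\mathrm{loc}}(q)$ at distance $\delta$ from $q$; combined with the uniform $C^1$-bound on $\tau^\veps$ from (c) this will, by a standard interpolation along the leaf, yield a pointwise bound $\abs{d_q\tau^\veps(v)}\le A\veps^\theta\norm v$. (Concretely: in the straightened coordinates $d\tau^\veps$ restricted to the $W^{ss}$-direction is $\del\rho^\veps/\del y_n$, but $\rho$ does not involve $y_n$, so $\del\rho^\veps/\del y_n = (\del\rho/\del y_n)\ast\eta_\veps = 0$ — wait, that is too strong; more honestly, $W^{ss}$ inside a $W^{cs}$-leaf is $\{\varphi_n=\mathrm{const}\}$ only in the \emph{H\"older} chart $\varphi$, and $E^{ss}$ in the smooth straightening coordinates is only H\"older, so one cannot literally kill a coordinate derivative. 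Hence the integral/interpolation argument above is the honest route.) I expect this direction-(b) estimate to be the main obstacle, precisely because it requires exploiting the exact $W^{ss}$-invariance of $\tau$ against the merely H\"older regularity of everything in sight, and must be done carefully enough that the constant $A$ is genuinely independent of $\veps$, $q$, and $v$ — which is what forces the use of the \emph{finite} atlas and the uniform H\"older norm of $\tau$ rather than any local estimate.
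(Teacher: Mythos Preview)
Your approach --- mollifying $\tau$ directly in flow-straightened coordinates --- handles (a), (c), and (d) cleanly, and it is genuinely different from the paper's. But the argument for (b) has a real gap, and it is not repairable along these lines.

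The ``standard interpolation'' you invoke does not give what you claim. Along a $W^{ss}$-leaf the function $h := (\tau^\veps - \tau)\!\restriction_{\text{leaf}} = \tau^\veps\!\restriction_{\text{leaf}} - \text{const}$ satisfies $\norm{h}_{C^0} \lesssim \veps^\theta$ and, from (c), $\norm{h'} \lesssim \veps^{\theta-1}$. These two bounds together imply nothing better than $\norm{h'} \lesssim \veps^{\theta-1}$: consider $h(t) = \veps^\theta \sin(t/\veps)$. If instead you try to use a second-derivative bound, mollification of a $C^\theta$ function gives $\norm{d^2\tau^\veps} \lesssim \veps^{\theta-2}$, and Landau--Kolmogorov then yields only $\norm{h'} \lesssim \sqrt{\veps^\theta \cdot \veps^{\theta-2}} = \veps^{\theta-1}$, again no gain. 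The underlying geometric reason is that mollifying the graph function $\rho$ produces a smooth hypersurface $\Sigma^\veps = \{s = -\rho^\veps(y)\}$ that is $C^0$-close to $\Sigma$ by $O(\veps^\theta)$, but whose \emph{tangent planes} are only controlled to $O(\veps^{\theta-1})$; nothing in your construction forces $T\Sigma^\veps$ to be close to $E^{ss}$ inside the $W^{cs}$-leaves, which is exactly what (b) requires.

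The paper takes a different route precisely to secure (b): rather than mollify $\tau$, it mollifies the H\"older foliation-chart components $\varphi_{n_s+1},\dots,\varphi_n$ to obtain a smooth foliation $\ms{F}_\veps$ whose leaves make an angle $\lesssim \veps^\theta$ with the local $W^{ss}$-leaves. The smooth cross section is then $\Sigma^\veps = \bigcup_{x\in W^{uu}_{\text{loc}}(p)}\ms{F}_\veps(x)$, and $\tau^\veps$ is the flow-time to $\Sigma^\veps$. Since $d\tau^\veps$ vanishes on $T\Sigma^\veps$ and $T(\Sigma^\veps\cap W^{cs}_{\text{loc}})$ is $\veps^\theta$-close to $E^{ss}$, one gets $\norm{d\tau^\veps\!\restriction_{E^{ss}}} \lesssim \veps^\theta$ on $\Sigma^\veps$; flow-invariance $f_t^\ast(d\tau^\veps)=d\tau^\veps$ then propagates this over $V^\veps$. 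The angle estimate on $\ms{F}_\veps$ versus $W^{ss}$ is the key geometric input that a direct mollification of $\rho$ cannot supply.

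(A minor aside: the smooth hypersurface $\{s=0\}$ cannot literally contain the H\"older cross section $\Sigma$; you mean $\Sigma$ is the graph $s=-\rho(y)$ over a nearby smooth transversal. This is easily fixed and is not the essential issue.)
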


  \begin{proof}
    For the sake of notational simplicity, we will write $f(x) \lesssim g(x)$ ($x \in S$) to mean that there exists a constant $C$ independent of $x \in S$ such that $f(x) \leq C g(x)$, for all $x \in S$.

    Since $U$ is contained in a smooth chart for $M$, by Corollary~\ref{cor:mollify} there exists $\veps_\ast$ such that for each $i > n_s$ there is a family $\varphi_i^\veps$ ($0 < \veps < \veps_\ast$) of smooth approximations of $\varphi_i$ satisfying
\begin{equation}  \label{eq:phi}
  \norm{\varphi_i^\veps - \varphi_i}_{C^0} \lesssim \veps^\theta \qquad \text{and}
  \qquad \norm{d\varphi_i^\veps}_{C^0} \lesssim \veps^{\theta-1}.
\end{equation}
Note that $u^\veps$ is defined on all of $U$. Denote by $\ms{F}_\veps$ the foliation of $U^\veps$ defined by
\begin{displaymath}
  \varphi^\veps_{n_s+1} = \text{constant}, \cdots, \varphi^\veps_n = \text{constant}.
\end{displaymath}
It is easy to see that $\ms{F}_\veps$ is smooth and the (largest principal) angle between $W^{ss}_\text{loc}$ and $\ms{F}_\veps$ is $\lesssim \veps^\theta$. Let
\begin{displaymath}
  \Sigma^\veps = \bigcup_{x \in W^{uu}_{\text{loc}}(p)} \ms{F}_\veps(x).
\end{displaymath}
Then $\Sigma^\veps$ is a smooth local cross section and there exists $T_\veps > 0$ such that the set
\begin{displaymath}
  V^\veps = \bigcup_{\abs{t} < T_\veps} f_t(\Sigma^\veps)
\end{displaymath}
is a smooth flow box for $X$ contained in $U^\veps$. It is clear that the length $T_\veps$ of $V^\veps$ is close to the length $T$ of $V$; we can also take $T_\veps \leq T$.

Define $\tau^\veps : V^\veps \to \R$ by
\begin{displaymath}
  \tau^\veps(f_tq) = t,
\end{displaymath}
for all $q \in \Sigma^\veps$ and $-T_\veps < t < T_\veps$. Clearly, $\tau^\veps$ is smooth and $X\tau^\veps = 1$, proving (a).

\begin{figure}[h]
\centerline{
\includegraphics[width=0.7\hsize]{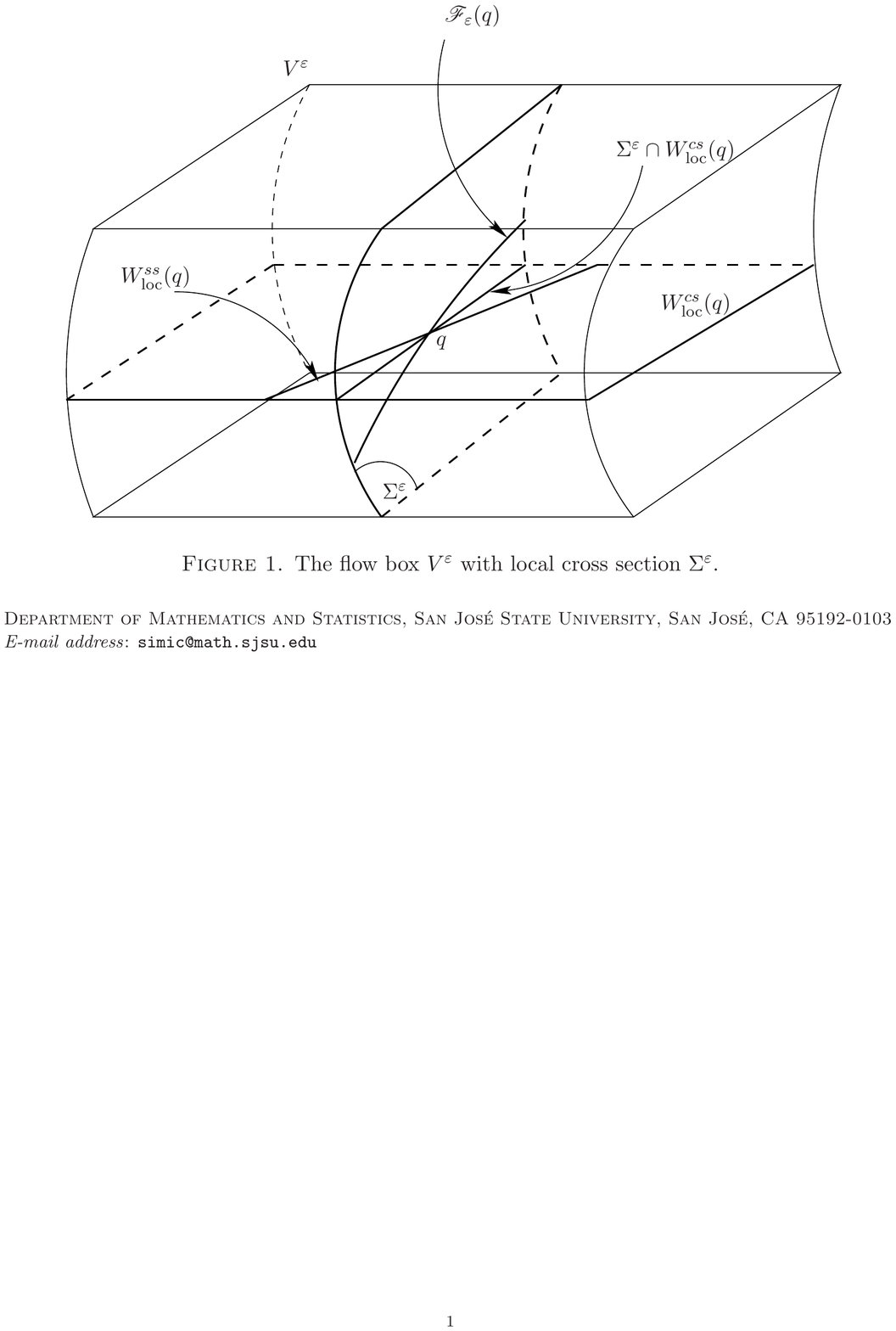}}
\caption{The flow box $V^\veps$ with local cross section $\Sigma^\veps$.}
\label{fig:flowbox}
\end{figure}

We will first show that (b) holds along $\Sigma^\veps$.  Let $q \in \Sigma^\veps$ be arbitrary. Since the angle between $\ms{F}_\veps(q)$ and $W^{ss}_{\text{loc}}(q)$ is $\lesssim \veps^\theta$, it follows that the angle between $\Sigma^\veps \cap W^{cs}_{\text{loc}}(q)$ and $W^{ss}_{\text{loc}}(q)$ is also $\lesssim \veps^\theta$. See Figure~\ref{fig:flowbox}. Then the fact that $d\tau^\veps = 0$ on $\Sigma^\veps \cap W^{cs}_{\text{loc}}(q)$ implies $\norm{d_q\tau^\veps \! \restriction_{E^{ss}}} \lesssim \veps^\theta$, as desired. To extend this to all points in $V^\veps$ we will use the invariance of $d\tau^\veps$ with respect to the flow: $f_t^\ast (d\tau^\veps) = d\tau^\veps$, whenever both sides are defined. For $q \in \Sigma^\veps$, $\abs{t} < T_\veps$ and $v \in E^{ss}_{f_t q}$, we have
\begin{align*}
  \abs{d_{f_t q}\tau^\veps(v)} & = \abs{d_q \tau^\veps(Tf_{-t}(v))}  \\
    & \leq \norm{d_q\tau^\veps \! \restriction_{E^{ss}}} \norm{Tf_{-t}(v)} \\
    & \lesssim \veps^\theta \mu_-^{-t} \norm{v} \\
    & \leq \veps^\theta \mu_-^{-T_\veps} \norm{v} \\
    & \leq \veps^\theta \mu_-^{-T} \norm{v},
\end{align*}
where we used $T_\veps \leq T$. This proves (b) for all $q \in V^\veps$.

To prove (c), we work in a smooth coordinate system in which $X = \del/\del x_1$. Since $\Sigma$ is a $C^\theta$ hypersurface, it is locally the graph of a $C^\theta$ function $g$ such that $\tau(g(z),z) = 0$, for all $z$ is some open set in $\R^{n-1}$. Similarly, $\Sigma^\veps$ is locally the graph of a smooth function $g^\veps$ such that $\tau^\veps(g^\veps(z),z) = 0$, for all $z$ in some open set in $\R^{n-1}$. Since $g^\veps \to g$, as $\veps \to 0$, \eqref{eq:phi} forces $\norm{dg^\veps} \lesssim \veps^{\theta-1}$. Differentiating $\tau^\veps(g^\veps(z),z) = 0$ with respect to $z_i$ for $i > 1$ and using $\del \tau^\veps/\del z_1 = X\tau^\veps = 1$, we obtain
\begin{displaymath}
  \abs{ \frac{\del \tau^\veps}{\del z_i}} = \abs{ \frac{\del g^\veps}{\del z_i}} \lesssim \veps^{\theta-1}.
\end{displaymath}
Thus (c) holds on $\Sigma^\veps$; we can extend it to $V^\veps$ using the flow invariance of $d\tau^\veps$ as in the proof of (b).

Part (d) holds by construction. 
  \end{proof}

  By the above analysis and compactness, we can cover $M$ by finitely many H\"older flow boxes $V_1,\ldots, V_\ell$, each of which is equipped with a local H\"older cross section $\Sigma_i$. We can approximate each $\Sigma_i$ by a smooth cross section $\Sigma_i^\veps$ as above and obtain smooth flow boxes $V_i^\veps \subset V_i$ and smooth functions $\tau_i^\veps : V_i^\veps \to \R$ satisfying the properties from Lemma~\ref{lem:tau}; namely,
  \begin{equation}  \label{eq:tau}
    X\tau_i^\veps = 1, \qquad \norm{d\tau_i^\veps \! \restriction_{E^{ss}}} \leq A_i \veps^\theta, 
    \qquad \norm{d\tau_i^\veps} \leq B_i \veps^{\theta-1},
  \end{equation}
  where the constants $A_i, B_i$ are independent of $\veps$. Let $A = \max A_i, B = \max B_i$ (not to be confused with the constants $A, B$ in Lemma~\ref{lem:tau}). By Lemma~\ref{lem:tau} (d), there exists $\veps_\ast > 0$ such that for all $0 < \veps < \veps_\ast$ the sets $V_1^\veps,\ldots, V_\ell^\veps$ cover $M$.

\subsection{Distance to the space of closed forms}
\label{sec:closed-forms}

We will consider $C^r$ differential $k$-forms $\xi$, with $r \geq
1$. We denote the $C^0$ norm of $\xi$ on $M$ by $\norm{\xi}$:
\begin{equation}  \label{eq:def-norm}
  \norm{\xi} = \sup_{p \in M} \abs{\xi_p},
\end{equation}
where $\abs{\xi_p}$ is is the operator norm of $\xi_p$ as a $k$-linear
map $T_p M \times \cdots \times T_p M \to \R$.

Consider first a differential form $\omega$ on $M \times [0,1]$, where
$t$ is the coordinate in $[0,1]$. Denote by $\pi_M : M \times [0,1]
\to M$ and $\pi_I : M \times [0,1] \to [0,1]$ the obvious
projections. Since
\begin{displaymath}
  T_{(p,t)} (M \times [0,1]) = T_p M \oplus T_t [0,1],
\end{displaymath}
any differential $k$-form on $M \times [0,1]$ can be uniquely written
as
\begin{displaymath}
  \omega = \omega_0 + dt \wedge \eta,
\end{displaymath}
where $\omega_0(v_1,\ldots,v_k) = 0$ if some $v_i$ is in the kernel of
$(\pi_M)_\ast$ and $\eta$ is a $(k-1)$-form with the analogous
property (i.e., $i_v \omega_0 = i_v \eta = 0$, for every ``vertical''
vector $v \in T(M \times [0,1])$, where $i_v$ denotes contraction by
$v$).

Define a $(k-1)$-form $\ms{H}(\xi)$ on $M$ by
\begin{displaymath}
  \ms{H}(\omega)_p = \int_0^1 j_t^\ast \eta_{(p,t)} \: dt,
\end{displaymath}
where $j_t : M \to M \times [0,1]$ is defined by $j_t(p) =
(p,t)$. It is well-known (cf., \cite{spivak:cidg+05}) that
\begin{equation}   \label{eq:poincare}
  j_1^\ast \omega - j_0^\ast \omega = d(\ms{H}\omega) + \ms{H}(d\omega).
\end{equation}
Considering $\ms{H}$ as a linear operator from $\Omega^k(M \times
[0,1])$ to $\Omega^{k-1}(M)$, both equipped with the $C^0$ norm, it is
not hard to see that
\begin{equation}   \label{eq:norm}
  \norm{\ms{H}} = 1.
\end{equation}
We claim:

\begin{prop}  \label{prop:forms}
  Let $\xi$ be a $C^r$ differential $k$-form ($r \geq 1$) on a closed
  manifold $M$. Then
  \begin{displaymath}
    \inf \{ \norm{\xi - \eta} : \eta \in C^r, \ d\eta = 0\} \leq \norm{d\xi}.
  \end{displaymath}
\end{prop}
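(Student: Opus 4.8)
The plan is to read the closed approximant off the chain--homotopy formula \eqref{eq:poincare}, controlling sizes by $\norm{\ms H}=1$ from \eqref{eq:norm}. First note the elementary reduction: a closed $k$-form within $C^0$-distance $\delta$ of $\xi$ is the same thing as a form $\rho$ with $d\rho=d\xi$ and $\norm{\rho}\le\delta$ (put $\eta=\xi-\rho$). So it suffices to produce a small primitive of $d\xi$.

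To get one, I would pick a $k$-form $\omega$ on the cylinder $M\times[0,1]$ with $j_1^*\omega=\xi$ and $j_0^*\omega$ closed, and set $\eta:=\xi-\ms H(d\omega)$. By \eqref{eq:poincare}, $\xi-j_0^*\omega=d(\ms H\omega)+\ms H(d\omega)$, hence $\eta=j_0^*\omega+d(\ms H\omega)$ is the sum of a closed form and an exact one and so is closed; and by \eqref{eq:norm},
\[
  \norm{\xi-\eta}=\norm{\ms H(d\omega)}\le\norm{d\omega}.
\]
So everything comes down to choosing $\omega$ with $\norm{d\omega}\le\norm{d\xi}$. Writing $\omega=\omega_0(t)+dt\wedge\theta(t)$, so that $j_t^*\omega=\omega_0(t)$ and
\[
  d\omega=d_M\omega_0(t)+dt\wedge\bigl(\partial_t\omega_0(t)-d_M\theta(t)\bigr),
\]
the natural choice is the affine path $\omega_0(t)=(1-t)\,\eta_0+t\,\xi$ from a closed $k$-form $\eta_0$ to $\xi$; then $d_M\omega_0(t)=t\,d\xi$, whose norm is $\le\norm{d\xi}$ on $[0,1]$ whatever $\eta_0$ is. It remains to choose $\eta_0$ and $\theta(t)$ so that the vertical term $(\xi-\eta_0)-d_M\theta(t)$ also has norm $\le\norm{d\xi}$ --- equivalently, so that the primitive $\xi-\eta_0$ of $d\xi$ lies within $C^0$-distance $\norm{d\xi}$ of the exact $k$-forms.

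This vertical estimate is the main obstacle; it is where the geometry of $M$ genuinely enters, the preceding steps being formal. I would attack it by taking $\eta_0$ to be the closed component of $\xi$ in the Hodge decomposition, so that $\xi-\eta_0$ is coexact, and then bounding the $C^0$-norm of a coexact $k$-form by that of its exterior derivative through an explicit contraction/primitive operator on $M$ --- the analogue, in the base, of the integral estimate behind $\norm{\ms H}=1$ (note that only one derivative less than a full elliptic gain is needed here, so the failure of $C^0$ Schauder estimates is not an obstruction). Granting this, the bound $\norm{\xi-\eta}\le\norm{d\xi}$ is immediate; and since the $\eta$ produced is a priori only $C^{r-1}$ (because $d\omega$ costs one derivative), a standard regularization replaces it by a smooth closed form in the same cohomology class, with arbitrarily small $C^0$-error, which is harmless for the asserted infimum. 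The case $d\xi=0$ is trivial, taking $\eta=\xi$.
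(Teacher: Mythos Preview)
Your argument is circular. Once you unwind the cylinder construction, nothing has happened: with $\omega_0(t)=(1-t)\eta_0+t\xi$ and arbitrary $\theta(t)$, a direct computation gives
\[
  \ms{H}(d\omega)=\int_0^1\bigl(\partial_t\omega_0(t)-d_M\theta(t)\bigr)\,dt=\xi-\eta_0-d_M\Theta,\qquad \Theta:=\int_0^1\theta(t)\,dt,
\]
so your candidate is $\eta=\eta_0+d\Theta$, an \emph{arbitrary} closed form, and the bound you need, $\norm{\xi-\eta_0-d\Theta}\le\norm{d\xi}$, is literally the proposition restated. The ``vertical estimate'' you flag as the main obstacle is therefore not a subsidiary step but the entire content of the result. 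Your proposed attack on it---take $\eta_0$ to be the Hodge-closed part and bound $\norm{\xi-\eta_0}_{C^0}$ by $\norm{d\xi}_{C^0}$ via a global primitive operator with operator norm $\le 1$---is an assertion, not an argument: such an operator is exactly what you would need to construct, and the analogy with $\norm{\ms{H}}=1$ is misleading, since $\ms{H}$ integrates over the unit interval while a base primitive integrates along contraction paths whose lengths are not bounded by $1$.

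The paper avoids this circularity by \emph{localizing}. On a contractible chart $U$ with contraction $H:U\times[0,1]\to U$, the chain homotopy \eqref{eq:poincare} applied to $H^\ast\xi$ gives $\xi=d\ms{H}(H^\ast\xi)+\ms{H}(H^\ast d\xi)$, so the locally defined exact form $d\ms{H}(H^\ast\xi)$ already sits within $\norm{d\xi}$ of $\xi$. The point is that these local exact approximants, being determined by $\xi$ through a formula that is local in $p$, agree on overlaps and patch to a global closed form $\eta$; the estimate $\norm{\xi-\eta}\le\norm{d\xi}$ then follows chartwise. Your global approach never produces this local structure, which is why you are forced back onto the statement you are trying to prove.
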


In other words, $\norm{d\xi}$ is an upper bound on the distance from
$\xi$ to the space of closed forms. The inequality also holds for
continuous forms which admit a continuous exterior derivative.

\begin{proof}[Proof of the Proposition]
  First, let us show that the result holds on any manifold $M$ which
  is smoothly contractible to a point $p_0$ via $H : M \times [0,1]
  \to M$, where $H(p,0) = p_0$ and $H(p,1) = p$, for all $p \in
  M$. Since $H \circ j_1$ is the identity map of $M$ and $H \circ j_0$
  is the constant map $p_0$, it follows that
  \begin{displaymath}
    \xi = (H \circ j_1)^\ast \xi = j_1^\ast (H^\ast \xi) \quad
    \text{and} \quad
    0 = (H \circ j_0)^\ast \xi = j_0^\ast (H^\ast \xi).
  \end{displaymath}
  Applying \eqref{eq:poincare} to $H^\ast \xi$, we obtain
  \begin{align*}
    \xi & = \xi - 0 \\
      & = j_1^\ast (H^\ast \xi) - j_0^\ast (H^\ast \xi) \\
      & = d \ms{H}(\xi) + \ms{H}(d \xi).
  \end{align*}
  Using \eqref{eq:norm}, we obtain
  \begin{displaymath}
    \norm{\xi - d \ms{H}(\xi)} = \norm{\ms{H}(d\xi)} \leq \norm{d\xi}.
  \end{displaymath}
  Therefore, the statement of the theorem holds for contractible $M$.

  Let $M$ now be any closed manifold and $\xi$ a $C^r$ $k$-form on
  $M$, $r \geq 1$. Cover $M$ by contractible open sets $U_1, \ldots,
  U_m$. Denote the operator $\ms{H}$ restricted to forms on $U_i$ by
  $\ms{H}_i$ and let $\xi_i$ be the restriction of $\xi$ to
  $U_i$. Define a $k$-form $\eta$ on $M$ by requiring that the
  restriction of $\eta$ to $U_i$ be equal to $d \ms{H}_i (\xi_i)$. We
  claim that $\eta$ is well-defined and closed.

  Indeed, $\xi_i = \xi_j$ on $U_i \cap U_j$, and $\ms{H}_i (\omega) =
  \ms{H}_j(\omega)$ for every $k$-form $\omega$ defined on $(U_i \cap
  U_j) \times [0,1]$. Thus on $U_i \cap U_j$, we have
  $d\ms{H}_i(\xi_i) = d\ms{H}_j(\xi_j)$, so $\eta$ is
  well-defined. Since $\eta$ is locally exact, it follows that it is
  closed. By \eqref{eq:def-norm}, we obtain
  \begin{displaymath}
    \norm{\xi - \eta} \leq \max_{1 \leq i \leq m} \norm{\xi_i -
      d\ms{H}_i(\xi_i)} \leq \max_{1 \leq i \leq m} \norm{d\xi_i} \leq \norm{d\xi}.
  \end{displaymath}
  This completes the proof of the proposition.
\end{proof}

\subsection{Change of Riemannian metric}
\label{sec:change}

Denote by $\Omega$ the smooth volume form preserved by the flow and
let $\ms{R}$ be the Riemannian metric which induces $\Omega$.

Our goal is to show that relative to some Riemannian metric the area
of the parallelogram $Tf_{-t}(v \wedge w)$ grows as $(\mu_+^{n_s-1}
\lambda_+^{n_u-1})^t$, where $v \in E^{ss}$, $w \in E^{uu}$, and $t
\geq 0$. To do this, it will be convenient to switch from the original
Riemannian metric $\ms{R}$ to a new metric $\ms{R}'$ with respect to
which $X$ is a unit vector and $E^c \oplus E^{ss} \oplus E^{uu}$ is an
orthogonal splitting. This metric can in general be only continuous
and the corresponding volume form $\Omega'$ may not be invariant with
respect to the flow. We will show that this does not present a
problem.

Let $\ms{R}'$ be as above and let $\Omega'$ be the Riemannian volume
form induced by $\ms{R}'$. Since $\Omega$ and $\Omega'$ are both
volume forms, there exists a positive continuous function $\phi$ such
that $\Omega' = \phi \: \Omega$. Let
\begin{displaymath}
  L = \frac{\max_M \phi}{\min_M \phi}.
\end{displaymath}
Denote the norms of tangent vectors (and their wedge products) with
respect to $\ms{R}$ and $\ms{R}'$ by $\norm{\cdot}$ and
$\norm{\cdot}'$, respectively. By compactness of $M$ there exist $b_-,
b_+ > 0$ such that
\begin{displaymath}
  b_- \norm{v} \leq \norm{v}' \leq b_+ \norm{v},
\end{displaymath}
for all $v \in TM$. Observe that for $v \in E^{ss}$, $w \in
E^{uu}$, and $t \geq 0$, we have
\begin{displaymath}
  \norm{Tf_t(v)}' \leq bc \mu_+^t \norm{v}' \quad \text{and} \quad 
  \norm{Tf_t(w)}' \leq bc \lambda_+^t \norm{w}',
\end{displaymath}
where $b = b_+/b_-$.
It is easy to check that
\begin{displaymath}
  f_t^\ast \Omega' = \frac{\phi \circ f_t}{\phi} \Omega',
\end{displaymath}
for all $t \in \R$. Thus for any $n$-dimensional parallelepiped $\Pi$
in a tangent space to $M$ and $t \in \R$, we have
\begin{displaymath}
  \norm{\Pi}' \leq L \norm{Tf_t(\Pi)}'
\end{displaymath}

\begin{lem}       \label{lem:backward}
  If $\Phi = \{ f_t \}$ is a volume preserving Anosov flow with
  constants defined in \eqref{eq:ss} and \eqref{eq:uu}, then
  \begin{displaymath}
    \norm{Tf_{-t}(v \wedge w)}' \leq L (bc)^{n-3} (\mu_+^{n_s-1}
    \lambda_+^{n_u-1})^t \norm{v \wedge w}',
  \end{displaymath}
  for all $v \in E^{ss}$, $w \in E^{uu}$ and $t \geq 0$.
\end{lem}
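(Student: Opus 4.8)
The plan is to exploit the volume-preserving property to convert the contraction of a codimension-$3$ complementary parallelepiped into the growth of $Tf_{-t}(v\wedge w)$. Fix $v\in E^{ss}$ and $w\in E^{uu}$ with $t\ge 0$. Work in the metric $\ms{R}'$, in which $X$ is a unit vector and $E^c\oplus E^{ss}\oplus E^{uu}$ is orthogonal. Choose an orthonormal basis $e_1,\ldots,e_{n_s}$ of $E^{ss}$ with $e_1$ parallel to $v$, an orthonormal basis $g_1,\ldots,g_{n_u}$ of $E^{uu}$ with $g_1$ parallel to $w$, and let $X$ itself span $E^c$. Form the full $n$-parallelepiped $\Pi = X\wedge e_1\wedge\cdots\wedge e_{n_s}\wedge g_1\wedge\cdots\wedge g_{n_u}$ and the complementary $(n-3)$-parallelepiped $\Pi' = e_2\wedge\cdots\wedge e_{n_s}\wedge g_2\wedge\cdots\wedge g_{n_u}$. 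Since the splitting is orthogonal, $\norm{\Pi}' = \norm{v\wedge w}'\cdot\norm{\Pi'}'$ (up to the fixed nonzero factors $\norm{v}'^{-1}\norm{w}'^{-1}$, which are absorbed because $e_1\parallel v$, $g_1\parallel w$; more precisely one has exact equality of the volumes since all the listed vectors are orthogonal and $\norm{X}'=1$).

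Next I would push $\Pi$ forward by $f_t$. Because $f_t$ preserves each of $E^c$, $E^{ss}$, $E^{uu}$ and $X$ is flow-invariant, $Tf_t(\Pi)$ decomposes as the wedge of $X$, the image of $v\wedge w$ under $Tf_t$ restricted to $E^{ss}\oplus E^{uu}$, and $Tf_t(\Pi')$. The change-of-metric estimate from \S\ref{sec:change} gives $\norm{\Pi}' \le L\,\norm{Tf_t(\Pi)}'$ for every $n$-parallelepiped and all $t\in\R$; applying this with $\Pi$ replaced by $Tf_{-t}(\Pi)$ (equivalently, rewriting the inequality for the parallelepiped $Tf_{-t}(v\wedge w)\wedge X\wedge Tf_{-t}(\Pi')$) yields
\begin{displaymath}
  \norm{Tf_{-t}(v\wedge w)}'\,\norm{Tf_{-t}(\Pi')}' \le L\,\norm{v\wedge w}'\,\norm{\Pi'}'.
\end{displaymath}
Rearranged, $\norm{Tf_{-t}(v\wedge w)}' \le L\,\dfrac{\norm{\Pi'}'}{\norm{Tf_{-t}(\Pi')}'}\,\norm{v\wedge w}'$, so it remains to bound $\norm{\Pi'}'/\norm{Tf_{-t}(\Pi')}'$ from above, i.e.\ to show $\norm{Tf_{-t}(\Pi')}'$ is not too small — equivalently, that $\norm{Tf_t(\Pi')}'$ does not grow too fast.

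The bound on $\norm{Tf_t(\Pi')}'$ is the routine but central estimate: $\Pi'$ is built from $n_s-1$ vectors in $E^{ss}$ and $n_u-1$ vectors in $E^{uu}$, and the estimates in \S\ref{sec:change} give $\norm{Tf_t(e_i)}'\le bc\,\mu_+^t$ and $\norm{Tf_t(g_j)}'\le bc\,\lambda_+^t$ for $t\ge0$. Hence $\norm{Tf_t(\Pi')}' \le (bc)^{(n_s-1)+(n_u-1)}\,\mu_+^{(n_s-1)t}\,\lambda_+^{(n_u-1)t}\,\norm{\Pi'}'$, and since $(n_s-1)+(n_u-1) = n-3$ (as $\dim E^c=1$) this reads $\norm{Tf_t(\Pi')}' \le (bc)^{n-3}(\mu_+^{n_s-1}\lambda_+^{n_u-1})^t\,\norm{\Pi'}'$. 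Taking $t\ge0$ and replacing the roles of $\Pi'$ and $Tf_{-t}(\Pi')$ (i.e.\ applying this with $\Pi'$ there equal to $Tf_{-t}(\Pi')$ here) gives $\norm{\Pi'}' \le (bc)^{n-3}(\mu_+^{n_s-1}\lambda_+^{n_u-1})^t\,\norm{Tf_{-t}(\Pi')}'$, which is exactly the reciprocal bound needed. Substituting into the displayed inequality produces
\begin{displaymath}
  \norm{Tf_{-t}(v\wedge w)}' \le L\,(bc)^{n-3}\,(\mu_+^{n_s-1}\lambda_+^{n_u-1})^t\,\norm{v\wedge w}',
\end{displaymath}
as claimed. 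The main obstacle, such as it is, is bookkeeping: one must be careful that the orthogonality of the metric $\ms{R}'$ makes the norm of the $n$-parallelepiped factor cleanly as the product of the norm of $v\wedge w$ and the norm of the complementary $\Pi'$ (with no hidden angle factors), and that the volume-distortion constant $L$ and the comparison constants $b,c$ apply uniformly in $t$; both are guaranteed by compactness of $M$ and by the computation $f_t^\ast\Omega' = (\phi\circ f_t/\phi)\,\Omega'$ recorded just before the lemma.
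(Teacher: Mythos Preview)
Your overall strategy---pair $v\wedge w$ with a complementary $(n-3)$-parallelepiped, use the near-invariance of volume, and bound the growth of the complement---is exactly the paper's. But there is a genuine gap at the displayed inequality
\[
  \norm{Tf_{-t}(v\wedge w)}'\,\norm{Tf_{-t}(\Pi')}' \le L\,\norm{v\wedge w}'\,\norm{\Pi'}'.
\]
From $\norm{Tf_{-t}(\Pi)}' \le L\norm{\Pi}'$ and the orthogonality at $p$ giving $\norm{\Pi}' = \norm{v\wedge w}'\,\norm{\Pi'}'$, the inequality above would follow only if $\norm{Tf_{-t}(\Pi)}' \ge \norm{Tf_{-t}(v\wedge w)}'\,\norm{Tf_{-t}(\Pi')}'$. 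That is false in general: although $E^{ss}$, $E^{uu}$, $E^c$ remain mutually orthogonal at $f_{-t}(p)$, the images $Tf_{-t}(e_1),\ldots,Tf_{-t}(e_{n_s})$ are typically \emph{not} orthogonal inside $E^{ss}$ (and likewise in $E^{uu}$), so the norm of the full wedge does not factor as the product of the norms of the two pieces. Hadamard's inequality gives only the opposite direction, $\norm{Tf_{-t}(\Pi)}' \le \norm{Tf_{-t}(v\wedge w)}'\,\norm{Tf_{-t}(\Pi')}'$, which is of no use here.

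The paper avoids this by choosing the complementary orthonormal frame at the \emph{target} point $f_{-t}(p)$ rather than at $p$: set $v_1 = Tf_{-t}(v)$, $w_1 = Tf_{-t}(w)$, and pick unit vectors $v_2,\ldots,v_{n_s}\in E^{ss}$, $w_2,\ldots,w_{n_u}\in E^{uu}$ at $f_{-t}(p)$ so that $(X,v_1,\ldots,w_{n_u})$ is orthogonal there. Then $\norm{\Pi}' = \norm{v_1\wedge w_1}' = \norm{Tf_{-t}(v\wedge w)}'$ exactly, and after applying $\norm{\Pi}'\le L\norm{Tf_t(\Pi)}'$ one uses Hadamard in the \emph{forward} direction to split off $\norm{Tf_t(v_1\wedge w_1)}' = \norm{v\wedge w}'$ and bound the remaining $(n-3)$ factors by $(bc)^{n-3}(\mu_+^{n_s-1}\lambda_+^{n_u-1})^t$. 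Your argument is easily repaired by making this single change of basepoint.
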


\begin{proof}
  Let $v \in E^{ss}$, $w \in E^{uu}$ and $t \geq 0$ be arbitrary. Set
  $v_1 = Tf_{-t}(v)$ and $w_1 = Tf_{-t}(w)$. Choose vectors $v_2,
  \ldots, v_{n_s} \in E^{ss}$ and $w_2, \ldots, w_{n_u} \in E^{uu}$
  such that, relative to $\ms{R}'$, $(X, v_1, \ldots, v_{n_s}, w_1,
  \ldots, w_{n_u})$ is an orthogonal basis of the corresponding
  tangent space and $v_i, w_i$ are all of unit length, for $i \geq
  2$. Then:
  \begin{align*}
    \norm{Tf_{-t}(v \wedge w)}' & = \norm{v_1 \wedge w_1}' \\
    & = \norm{X \wedge v_1 \wedge \cdots \wedge
      v_{n_s} \wedge w_1 \wedge \cdots \wedge w_{n_u}}' \\
    & = \norm{\Pi}' \\
    & \leq L \norm{Tf_t(\Pi)}' \\
    & = L \norm{Tf_t(X \wedge v_1 \wedge \cdots \wedge
      v_{n_s} \wedge w_1 \wedge \cdots \wedge w_{n_u})}' \\
    & \leq L \norm{Tf_t(v_1 \wedge w_1)}' \norm{Tf_t(X \wedge v_2 \wedge
      \cdots \wedge v_{n_s} \wedge w_2 \wedge \cdots \wedge w_{n_u})}' \\
    & \leq L \norm{v \wedge w}' (bc)^{n_s-1} \mu_+^{(n_s-1) t} \cdot
   (b c)^{n_u-1} \lambda_+^{(n_u-1)t} \\
    & = L (bc)^{n-3} \mu_+^{(n_s-1) t} \lambda_+^{(n_u-1)t} \norm{v \wedge w}'. \qedhere
  \end{align*}
\end{proof}

In the remainder of the paper we will always be working with $\ms{R}'$
as the underlying Riemannian metric on $M$; the norms of tangent
vectors and differential forms are taken relative to $\ms{R}'$ and
will be denoted by the symbol $\norm{\cdot}$ (thus slightly abusing
notation for the sake of keeping it less cumbersome).

\section{Proof of the main theorem}
\label{sec:proof}

We will construct a smooth closed 1-form $\eta$ such that $u =
\eta(X) > 0$. Assuming for a moment that such a form has been
found, the proof can be completed as follows. Define
\begin{displaymath}
  \tilde{X} = \frac{1}{u} X.
\end{displaymath}
Then $\tilde{X}$ is an Anosov vector field~\cite{anosov+sinai+67} and $\eta(\tilde{X}) =
1$. Thus the Lie derivative of $\eta$ with respect to $\tilde{X}$ satisfies
\begin{displaymath}
  L_{\tilde{X}} \eta = (d i_{\tilde{X}} + i_{\tilde{X}} d) \eta = 0,
\end{displaymath}
which implies that $\eta$ is invariant with respect to the flow. It
follows that its kernel $\text{Ker}(\eta)$ is an invariant codimension
one distribution transverse to the flow, so $\text{Ker}(\eta)$ is
forced to be the sum $\tilde{E}^{su} = \tilde{E}^{ss} \oplus
\tilde{E}^{uu}$ of the strong stable and strong unstable bundles of
$\tilde{X}$. Since $\eta$ is closed, $\tilde{E}^{su}$ is uniquely
integrable, so by Plante~\cite{plante72}, $\tilde{X}$ admits a global
cross section $\Sigma$, which is also a global cross section for $X$.

So it remains to construct a smooth 1-form $\eta$ with $\eta(X) > 0$,
which will be done in three steps. In the first two steps we construct
a smooth 1-form $\xi$ such that
\begin{displaymath}
  \xi(X) = 1 \qquad \text{and} \qquad \norm{d \xi} < 1.
\end{displaymath}
The third step consists of approximating $\xi$ by a smooth closed
1-form using Proposition~\ref{prop:forms}. \\

\noindent
\paragraph{\textbf{Step 1}} Let $\veps > 0$ be arbitrary. As in
\S\ref{sec:local-sections}, for $\veps < \veps_\ast$, we can cover $M$
be smooth flow boxes $V_1^\veps, \ldots, V_\ell^\veps$, with $\ell$
independent of $\veps$, such that with respect to the Hausdorff
distance each $V_i^\veps$ is close to a fixed open set $V_i$. In
addition, we have smooth functions $\tau_i^\veps : V_i^\veps \to \R$
such that
\begin{equation}   \label{eq:taus}
  X\tau_i^\veps = 1, \qquad \norm{d\tau_i^\veps \! \restriction_{E^{ss}}} \leq A \veps^\theta, 
    \quad \text{and} \quad \norm{d\tau_i^\veps} \leq B \veps^{\theta-1},
\end{equation}
where $A, B$ are constants independent of $\veps$.

Let $\{\psi_i^\veps \}$ be a smooth partition of unity subordinate to
the cover $\{ V_i^\veps \}$. Since $\ell$ and the sizes of the sets
$V_i^\veps$ are independent of $\veps$, there is a constant $C > 0$
also independent of $\veps$ such that
\begin{equation}    \label{eq:psi}
  \sum_{i=1}^\ell \norm{d \psi_i^\veps} \leq C,
\end{equation}
for all $0 < \veps < \veps_\ast$. Define
\begin{displaymath}
  \xi_0^\veps = \sum_{i=1}^\ell \psi_i^\veps \: d\tau_i^\veps.
\end{displaymath}
\begin{lem}       \label{lem:xi0}
  $\xi_0^\veps$ is a smooth 1-form on $M$ with the following
properties:

\begin{itemize}

\item[(a)] $\xi_0^\veps(X) = 1$;

\item[(b)] $\norm{\xi_0^\veps \! \restriction_{E^{ss}}} \leq A \veps^\theta$,
    for all $0 < \veps < \veps_\ast$;

\item[(c)] $\abs{d\xi_0^\veps (v,w)} \leq D \veps^\theta$, for all unit vectors $v, w
  \in E^{cs}$ and $0 < \veps < \veps_\ast$, where $D$ is a constant independent
  of $\veps$;

\item[(d)] $\norm{d \xi_0^\veps} \leq K \veps^{\theta-1}$, for every
  $0 < \veps < \veps_\ast$, where $K$ is a constant independent of $\veps$.

\end{itemize}
\end{lem}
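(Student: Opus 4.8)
The strategy is to verify each of (a)--(d) directly from the definition $\xi_0^\veps = \sum_i \psi_i^\veps \, d\tau_i^\veps$ together with the estimates \eqref{eq:taus} on the local functions $\tau_i^\veps$ and the bound \eqref{eq:psi} on the partition of unity. Property (a) is immediate: applying $\xi_0^\veps$ to $X$ and using $X\tau_i^\veps = 1$ on each $V_i^\veps$ gives $\xi_0^\veps(X) = \sum_i \psi_i^\veps \cdot (X\tau_i^\veps) = \sum_i \psi_i^\veps = 1$, since $\{\psi_i^\veps\}$ is a partition of unity. Property (b) follows similarly by restricting to a unit vector $v \in E^{ss}$: $|\xi_0^\veps(v)| = |\sum_i \psi_i^\veps \, d\tau_i^\veps(v)| \leq \sum_i \psi_i^\veps \, \|d\tau_i^\veps\!\restriction_{E^{ss}}\| \leq A\veps^\theta \sum_i \psi_i^\veps = A\veps^\theta$.

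For (d), I would differentiate the sum using the Leibniz rule: $d\xi_0^\veps = \sum_i d\psi_i^\veps \wedge d\tau_i^\veps + \sum_i \psi_i^\veps \, d^2\tau_i^\veps = \sum_i d\psi_i^\veps \wedge d\tau_i^\veps$, since $d^2 = 0$. Then $\|d\xi_0^\veps\| \leq \sum_i \|d\psi_i^\veps\| \, \|d\tau_i^\veps\| \leq \left(\sum_i \|d\psi_i^\veps\|\right) \cdot \max_i \|d\tau_i^\veps\| \leq C \cdot B\veps^{\theta-1}$, so we may take $K = CB$. (One should be slightly careful that $\sum_i d\psi_i^\veps = d(\sum_i \psi_i^\veps) = d(1) = 0$, which could be exploited, but the crude bound above already suffices.)

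The main work is in (c), where we need the $\veps^\theta$ rate rather than the $\veps^{\theta-1}$ rate of (d), and this is where I expect the real content to lie. Starting again from $d\xi_0^\veps = \sum_i d\psi_i^\veps \wedge d\tau_i^\veps$, the naive estimate gives only $\veps^{\theta-1}$ because $\|d\tau_i^\veps\|$ blows up. The fix is to use $\sum_i d\psi_i^\veps = 0$ to rewrite the sum: fixing a point $q$ lying in the flow boxes $V_{i_1}^\veps, \dots, V_{i_m}^\veps$ and subtracting off a fixed $d\tau_{i_1}^\veps$, we get $d\xi_0^\veps = \sum_i d\psi_i^\veps \wedge (d\tau_i^\veps - d\tau_{i_1}^\veps)$ near $q$. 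On vectors $v, w \in E^{cs}$, the key point is that $d\tau_i^\veps$ and $d\tau_{i_1}^\veps$ restricted to $E^{cs}$ differ by $O(\veps^\theta)$: indeed, two such one-forms both satisfy $(\cdot)(X) = 1$ and, by (b)-type reasoning, are $O(\veps^\theta)$ on $E^{ss}$, and since $E^{cs} = E^c \oplus E^{ss}$ with $E^c$ spanned by $X$, their difference is $O(\veps^\theta)$ on all of $E^{cs}$. Combining this with $\|d\psi_i^\veps\|$ bounded via \eqref{eq:psi} yields $|d\xi_0^\veps(v,w)| \leq C' \veps^\theta$ for unit $v, w \in E^{cs}$. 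The delicate bookkeeping here is making sure the comparison $\tau_i^\veps$ is chosen consistently over the finitely many overlapping boxes and that the constant stays uniform in $\veps$; this is routine given that $\ell$ and the geometry of the $V_i$ are $\veps$-independent, but it is the step that requires care.
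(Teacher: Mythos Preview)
Your proof is correct, and for parts (a), (b), and (d) it matches the paper's argument exactly (the paper also takes $K = BC$). For (c) the two proofs use the same ingredients---the identity $\sum_i d\psi_i^\veps = 0$ and the fact that each $d\tau_i^\veps$ equals $1$ on $X$ and is $O(\veps^\theta)$ on $E^{ss}$---but organize them differently. The paper splits $E^{cs} = E^c \oplus E^{ss}$ and bounds $d\xi_0^\veps$ on each block: for $v,w \in E^{ss}$ it uses $\abs{d\tau_i^\veps(v)} \leq A\veps^\theta$ directly, and for the cross term $(X,v)$ it expands the wedge and uses $d\tau_i^\veps(X)=1$ together with $\sum_i d\psi_i^\veps(v)=0$ to kill the dangerous piece. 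Your reference-subtraction trick $d\xi_0^\veps = \sum_i d\psi_i^\veps \wedge (d\tau_i^\veps - d\tau_{i_1}^\veps)$ packages both cases at once, since the difference $d\tau_i^\veps - d\tau_{i_1}^\veps$ vanishes on $X$ and is $O(\veps^\theta)$ on $E^{ss}$; this is arguably cleaner, while the paper's block decomposition makes the special role of the flow direction more visible. Your concern about choosing the reference $i_1$ ``consistently'' is unnecessary: the estimate is pointwise, so at each $q$ any index $i_1$ with $q \in V_{i_1}^\veps$ will do, and the constants $A$ and $C$ are already uniform in $q$ and $\veps$.
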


\begin{proof}
  Part (a) is clear. Part (b) follows easily from the second
  inequality in \eqref{eq:taus}. To prove (c), first note that if
  $\beta$ is a bilinear form on an inner product space $E$ which
  splits into two orthogonal subspaces $E_1$ and $E_2$ and $\beta_{ij}
  = \beta \!  \restriction_{E_i \times E_j}$, then
  \begin{equation}    \label{eq:bilinear}
    \norm{\beta} \leq \norm{\beta_{11}} + 2 \norm{\beta_{12}} + \norm{\beta_{22}}.
  \end{equation}
  We fix $p \in M$ and take $\beta = d_p\xi_0^\veps$, $E_1 = E^c_p$ and $E_2 =
  E^{ss}_p$. Since
  \begin{displaymath}
    d\xi_0^\veps = \sum_{i=1}^\ell d\psi_i^\veps \wedge d\tau_i^\veps,
  \end{displaymath}
  if $v, w \in E^{ss}$ are unit vectors, then
  \begin{displaymath}
    \abs{d\xi_0^\veps(v,w)} \leq 2 C \veps^\theta,
  \end{displaymath}
  and
  \begin{displaymath}
    \abs{d\xi_0^\veps(X,v)} = \abs{\sum_i d\psi_i^\veps(X) \:
      d\tau_i^\veps(v) - d\psi_i^\veps(v) \: d\tau_i^\veps(X)} 
    = \abs{\sum_i d\psi_i^\veps(X) \:
      d\tau_i^\veps(v)} \leq A C \veps^\theta,
  \end{displaymath}
  where we used $d\tau_i^\veps(X) = 1$ and $\sum_i d\psi_i^\veps(v) =
  d \left(\sum_i \psi_i^\veps\right)(v) = 0$.  Thus by
  \eqref{eq:bilinear} we can take $D = (A+2) C \veps$ in (c). Finally,
  \eqref{eq:psi} and $\norm{d\tau_i^\veps} \leq B \veps^{\theta-1}$
  imply
  \begin{displaymath}
    \norm{d\xi_0^\veps} = \norm{\sum_{i=1}^\ell d\psi_i^\veps \wedge
      d\tau_i^\veps} \leq B C \veps^{\theta-1},
  \end{displaymath}
  so (d) holds with $K = B C$. 
\end{proof}

In summary, for every $0 < \veps < \veps_\ast$ we have a 1-form which is small on
$E^{ss}$, whose exterior derivative is small when restricted to
$E^{cs}$, but whose overall norm grows as $\veps^{\theta-1}$, as
$\veps \to 0$. \\

\noindent
\paragraph{\textbf{Step 2}} To remedy the problem represented by part
(d) of the previous lemma, we flow backwards and use
Lemma~\ref{lem:backward}. Let $t > 0$ be large (how large will be
specified shortly) and set
\begin{displaymath}
  \xi_t^\veps = f_{-t}^\ast \xi_0^\veps.
\end{displaymath}
\begin{lem}
  There exists a constant $H > 0$ independent of $\veps$ and $t$ such
  that
  \begin{displaymath}
    \norm{d\xi_t^\veps} \leq \max\left\{ H \veps^{\theta-1} (\mu_+^{n_s-1}
    \lambda_+^{n_u-1})^t, H \veps^\theta \mu_-^{-2t}  \right\}
  \end{displaymath}
\end{lem}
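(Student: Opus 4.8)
The plan is to estimate $\norm{d\xi_t^\veps}$ by decomposing a tangent vector according to the splitting $TM = E^c \oplus E^{ss} \oplus E^{uu}$ and using the bilinear-form estimate, exactly in the spirit of the proof of Lemma~\ref{lem:xi0}(c). Since $d\xi_t^\veps = f_{-t}^\ast(d\xi_0^\veps)$, for tangent vectors $a, b$ at a point $p$ we have $d_p\xi_t^\veps(a,b) = d_{f_{-t}p}\xi_0^\veps(Tf_{-t}(a), Tf_{-t}(b))$, so everything reduces to controlling $d\xi_0^\veps$ evaluated on images under $Tf_{-t}$ of basis vectors from each of the three sub-bundles. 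I would fix an $\ms{R}'$-orthonormal basis adapted to the splitting at $p$ and write $\abs{d_p\xi_t^\veps(a,b)}$ in terms of the nine pairings coming from $\{E^c, E^{ss}, E^{uu}\}$; by the analogue of \eqref{eq:bilinear} for a three-fold orthogonal decomposition it suffices to bound each pairing separately, with the final constant $H$ absorbing the fixed combinatorial factor.

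The individual pairings split into cases according to which sub-bundles the two vectors lie in. For vectors in $E^c$ (i.e.\ multiples of $X$), note $i_X(d\xi_0^\veps) = L_X\xi_0^\veps - d(i_X\xi_0^\veps) = L_X\xi_0^\veps$ since $\xi_0^\veps(X)=1$ is constant; because each $\tau_i^\veps$ and each $\psi_i^\veps$ is flow-related only through $X\tau_i^\veps = 1$, the same bookkeeping as in Lemma~\ref{lem:xi0}(c) shows $\abs{d\xi_0^\veps(X,\cdot)}$ is already $\lesssim \veps^\theta$ on unit vectors, and pulling back by $f_{-t}$ only helps since $\norm{Tf_{-t}X} = \norm{X}$. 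The genuinely dangerous pairing is $E^{ss}\times E^{uu}$: for $v \in E^{ss}$, $w \in E^{uu}$ unit vectors, $\abs{d_p\xi_t^\veps(v,w)} = \abs{d\xi_0^\veps(Tf_{-t}v, Tf_{-t}w)} \leq \norm{d\xi_0^\veps}\,\norm{Tf_{-t}(v\wedge w)}$, and here Lemma~\ref{lem:backward} gives $\norm{Tf_{-t}(v\wedge w)} \leq L(bc)^{n-3}(\mu_+^{n_s-1}\lambda_+^{n_u-1})^t$, so combined with $\norm{d\xi_0^\veps}\leq K\veps^{\theta-1}$ from Lemma~\ref{lem:xi0}(d) this contributes the first term $H\veps^{\theta-1}(\mu_+^{n_s-1}\lambda_+^{n_u-1})^t$. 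For the $E^{ss}\times E^{ss}$ pairing we cannot afford $\norm{d\xi_0^\veps}$; instead we use Lemma~\ref{lem:xi0}(b)--(c): since $\xi_0^\veps$ is $\lesssim\veps^\theta$ on $E^{ss}$ and $d\xi_0^\veps$ is $\lesssim \veps^\theta$ on $E^{cs}\supset E^{ss}$, and $Tf_{-t}$ expands $E^{ss}$ by at most $\mu_-^{-t}$ per vector (from \eqref{eq:ss}), we get $\abs{d\xi_0^\veps(Tf_{-t}v, Tf_{-t}v')} \lesssim \veps^\theta\mu_-^{-2t}$, the second term. The remaining pairings $E^{ss}\times E^c$, $E^{uu}\times E^c$, $E^{uu}\times E^{uu}$ are dominated by one of these two: the $E^{uu}$ directions are \emph{contracted} by $Tf_{-t}$ so they only improve the estimate, and pairings involving $X$ fall under the first paragraph's remark.

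Assembling the nine bounds and taking $H$ to be a large enough constant (depending on $L$, $b$, $c$, $n$, $K$, $A$, $C$ but not on $\veps$ or $t$), every pairing is bounded by the maximum of $H\veps^{\theta-1}(\mu_+^{n_s-1}\lambda_+^{n_u-1})^t$ and $H\veps^\theta\mu_-^{-2t}$, which gives the claimed estimate for $\norm{d\xi_t^\veps}$. I expect the main obstacle to be the careful treatment of the $E^{ss}\times E^{ss}$ pairing: one must resist using the crude bound $\norm{d\xi_0^\veps}\lesssim\veps^{\theta-1}$ and instead exploit that $d\xi_0^\veps$ restricted to $E^{cs}$ is already small (Lemma~\ref{lem:xi0}(c)), so that the only loss comes from the two factors of $\mu_-^{-t}$ in expanding the two $E^{ss}$ inputs. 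A secondary point requiring care is verifying that the three-fold orthogonal decomposition inequality (the natural generalization of \eqref{eq:bilinear}) holds with a constant depending only on the number of blocks, and that the basis used is orthonormal with respect to $\ms{R}'$, for which $E^c\oplus E^{ss}\oplus E^{uu}$ is orthogonal by construction.
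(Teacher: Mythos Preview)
Your approach is essentially the same as the paper's; the only structural difference is that you use the three-fold orthogonal splitting $E^c \oplus E^{ss} \oplus E^{uu}$ whereas the paper uses the two-fold splitting $E_1 = E^{cs}$, $E_2 = E^{uu}$, which lets it treat all of $E^{cs}\times E^{cs}$ at once via Lemma~\ref{lem:xi0}(c) and $\norm{Tf_{-t}\!\restriction_{E^{cs}}}\le c\mu_-^{-t}$, and all of $E^{cs}\times E^{uu}$ via Lemma~\ref{lem:backward} after observing that the worst case is $v\in E^{ss}$.

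There is, however, one genuine slip. Your claim that ``the same bookkeeping as in Lemma~\ref{lem:xi0}(c) shows $\abs{d\xi_0^\veps(X,\cdot)}\lesssim\veps^\theta$ on unit vectors'' is false in general: the computation $i_X d\xi_0^\veps = L_X\xi_0^\veps = \sum_i (X\psi_i^\veps)\,d\tau_i^\veps$ only yields the $\veps^\theta$ bound when evaluated on $v\in E^{ss}$ (because that is where $\abs{d\tau_i^\veps(v)}\le A\veps^\theta$); on $E^{uu}$ you only have $\abs{d\tau_i^\veps}\le B\veps^{\theta-1}$. Consequently your treatment of the $E^c\times E^{uu}$ pairing is not justified as written. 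The fix is exactly what the paper does implicitly: for unit $w\in E^{uu}$ one has $\abs{d\xi_t^\veps(X,w)}\le \norm{d\xi_0^\veps}\,\norm{Tf_{-t}(X\wedge w)}$, and the same volume-preservation argument as in Lemma~\ref{lem:backward} (completing $X, Tf_{-t}w$ to an $\ms{R}'$-orthonormal basis) gives $\norm{Tf_{-t}(X\wedge w)}\le L(bc)^{n-2}\mu_+^{n_s t}\lambda_+^{(n_u-1)t}$, which is dominated by $(\mu_+^{n_s-1}\lambda_+^{n_u-1})^t$ since $\mu_+<1$. Once you patch this case, your nine-block decomposition is equivalent to the paper's four-block one and the proof goes through.
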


\begin{proof}
  Observe that if $\beta$ is a bilinear form as in the proof of the
  previous lemma and $\norm{\beta_{22}} \leq \norm{\beta_{ij}}$, for
  all $i, j = 1, 2$, then
  \begin{displaymath}
    \norm{\beta} \leq 4 \max(\norm{\beta_{11}}, \norm{\beta_{12}}).
  \end{displaymath}
  Fix $p \in M$ and take $\beta = d_p \xi_t^\veps$, $E = T_p M$, $E_1 =
  E^{cs}_p$ and $E_2 = E^{uu}_p$. Since $d \xi_t^\veps = f_{-t}^\ast
  (d \xi_0^\veps)$ and $\norm{Tf_{-t} \! \restriction_{E^{cs}}} \leq c
  \mu_-^{-t}$, for $t > 0$, Lemmas \ref{lem:xi0} and
  \ref{lem:backward} imply
  \begin{displaymath}
    \norm{\beta_{11}} \leq c^2 D \veps^\theta \mu_-^{-2t} \qquad \text{and} \qquad
    \norm{\beta_{12}} \leq K L \veps^{\theta-1} (bc)^{n-3} (\mu_+^{n_s-1}
    \lambda_+^{n_u-1})^t.
  \end{displaymath}
  Note that the second inequality holds because if $v \in E^{cs}$ and
  $w \in E^{uu}$, then $\abs{d \xi_t^\veps(v,w)} = \abs{d\xi_0
    (T_{-t}(v \wedge w))}$ is largest if $v \in E^{ss}$. Observe also
  that $\norm{\beta_{22}}$ is smaller than $\norm{\beta_{ij}}$ ($i, j
  = 1, 2$) since the flow contracts strong unstable vectors in
  negative time.

  The proof of the lemma is now complete with $H = 4 \max(c^2 D, K L
  (bc)^{n-3})$.
\end{proof}

Since $\xi_t^{\veps}(X) = 1$, for all $0 < \veps < \veps_\ast$ and $t > 0$, the
following lemma will complete the construction of the desired form
$\xi$.

\begin{lem}
  There exist $0 < \veps < \veps_\ast$ and $t > 0$ such that $\norm{d\xi_t^\veps} < 1$.
\end{lem}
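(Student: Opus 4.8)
The plan is to choose $\veps$ and $t$ so that both quantities inside the maximum in the previous lemma are strictly less than $1$, and to see that condition \eqref{eq:mainthm} is exactly what makes this possible. First I would rewrite the two bounds in a scale-free way: setting $\Lambda = \mu_+^{n_s-1}\lambda_+^{n_u-1} > 0$ and $\nu = \mu_-^{-2} > 1$, we must arrange
\begin{displaymath}
  H\veps^{\theta-1}\Lambda^t < 1 \qquad\text{and}\qquad H\veps^\theta \nu^t < 1.
\end{displaymath}
Taking logarithms (all quantities positive), these become $\log H + (\theta-1)\log\veps + t\log\Lambda < 0$ and $\log H + \theta\log\veps + t\log\nu < 0$. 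Since $\theta - 1 \le 0$ and $\log\veps \to -\infty$ as $\veps \to 0^+$, the term $(\theta-1)\log\veps$ is $\ge 0$ and in fact the first inequality, after fixing $t$, only gets \emph{harder} as $\veps\to 0$; conversely the second inequality gets \emph{easier} as $\veps \to 0$. So the natural move is to couple $t$ to $\veps$: set $t = t(\veps)$ growing slowly as $\veps \to 0$.

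The cleanest coupling is to demand that the two exponential factors $\veps^{\theta-1}\Lambda^t$ and $\veps^\theta\nu^t$ both tend to $0$ as $\veps \to 0$; then for small enough $\veps$ the factor $H$ is absorbed and both inequalities hold. Writing $t = s\,(-\log\veps)$ for a constant $s > 0$ to be chosen (so $t \to +\infty$ as $\veps\to 0$), we compute
\begin{displaymath}
  \veps^{\theta-1}\Lambda^t = \exp\bigl[(-\log\veps)\bigl(-(\theta-1) + s\log\Lambda\bigr)\bigr]
    = \exp\bigl[(-\log\veps)\bigl((1-\theta) + s\log\Lambda\bigr)\bigr],
\end{displaymath}
\begin{displaymath}
  \veps^\theta \nu^t = \exp\bigl[(-\log\veps)\bigl(\theta + s\log\nu\bigr)\bigr]
    = \exp\bigl[(-\log\veps)\bigl(\theta - 2s\log\mu_-\bigr)\bigr].
\end{displaymath}
Since $-\log\veps \to +\infty$, both factors go to $0$ provided the two bracketed coefficients are strictly negative, i.e.
\begin{displaymath}
  (1-\theta) + s\log\Lambda < 0 \qquad\text{and}\qquad \theta - 2s\log\mu_- < 0.
\end{displaymath}
Because $0 < \mu_- < 1$ we have $\log\mu_- < 0$, so the second inequality reads $s\,(-2\log\mu_-) < \theta$, i.e. $s < \theta/(-2\log\mu_-)$; note $-2\log\mu_- > 0$, so this gives a positive upper bound for $s$. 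The first inequality reads $s\,(-\log\Lambda) > 1-\theta$. If $\theta = 1$ this is automatic for any $s>0$ (and the first factor is just $\Lambda^t \to 0$ since $\Lambda < 1$ under \eqref{eq:mainthm}); if $\theta < 1$ it requires $\log\Lambda < 0$, i.e. $\Lambda < 1$, and then $s > (1-\theta)/(-\log\Lambda)$. So a valid $s$ exists iff
\begin{displaymath}
  \frac{1-\theta}{-\log\Lambda} < \frac{\theta}{-2\log\mu_-},
\end{displaymath}
equivalently $-2(1-\theta)\log\mu_- < -\theta\log\Lambda$, i.e. $(1-\theta)\log\mu_-^{2} > \theta\log\Lambda$ (flipping the sign when multiplying through; here both $\log\mu_-$ and $\log\Lambda$ are negative), which is exactly $\Lambda^\theta < \mu_-^{2(1-\theta)}$, that is, $\mu_+^{(n_s-1)\theta}\lambda_+^{(n_u-1)\theta} < \mu_-^{2(1-\theta)}$ --- precisely hypothesis \eqref{eq:mainthm}.

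Thus the proof proceeds: assume \eqref{eq:mainthm}, pick $s$ strictly between $(1-\theta)/(-\log\Lambda)$ and $\theta/(-2\log\mu_-)$ (interpreting the left endpoint as $0$ when $\theta = 1$), and for each small $\veps > 0$ set $t = \lceil s(-\log\veps)\rceil$ or simply $t = s(-\log\veps)$. By the displayed computations both $H\veps^{\theta-1}\Lambda^t \to 0$ and $H\veps^\theta\nu^t \to 0$ as $\veps \to 0^+$, so there is $\veps \in (0,\veps_\ast)$ with both less than $1$, hence $\norm{d\xi_t^\veps} < 1$ by the previous lemma. The one place needing a little care is the degenerate case $\theta = 1$: then the first bracket coefficient is $s\log\Lambda$ which is negative precisely because \eqref{eq:mainthm} forces $\Lambda = \mu_+^{n_s-1}\lambda_+^{n_u-1} < 1$, so the argument goes through unchanged; and one should also note $n_s \ge 1$, $n_u \ge 1$ so the exponents are nonnegative and $\Lambda \le 1$ at worst, with the strict inequality supplied by \eqref{eq:mainthm}. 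I expect the only real subtlety --- beyond bookkeeping with signs of logarithms of numbers in $(0,1)$ --- to be making sure the chosen $t$ can be taken to be an actual admissible flow time (any $t > 0$ works in the preceding lemma, so rounding $s(-\log\veps)$ up to an integer, or not, is harmless).
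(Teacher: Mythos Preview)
Your argument is correct and follows essentially the same route as the paper: both reduce the lemma to solving the pair of inequalities $H\veps^{\theta-1}\Lambda^t<1$ and $H\veps^\theta\mu_-^{-2t}<1$ and show that, in the $\veps\to 0$ regime, simultaneous solvability is exactly condition~\eqref{eq:mainthm}; the paper phrases this as ``the interval of admissible $t$ is nonempty'' while you equivalently parametrize $t=s(-\log\veps)$ and solve for~$s$. One small slip: in your second displayed computation the bracketed coefficient should be $-\theta - 2s\log\mu_-$ (equivalently $-\theta + s\log\nu$), not $\theta - 2s\log\mu_-$; your subsequent conclusion $s<\theta/(-2\log\mu_-)$ is nonetheless the correct one, so the error is purely typographical.
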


\begin{proof}
  By the previous Lemma, it suffices to show that the following system
  of inequalities
  \begin{align*}
    H \veps^{\theta-1} (\mu_+^{n_s-1} \lambda_+^{n_u-1})^t & < 1 \\
    H \veps^\theta \mu_-^{-2t} & < 1
  \end{align*}
  admits a solution $\veps, t > 0$. Solving the first inequality for
  $t$ we obtain
  \begin{equation}       \label{eq:t-}
    t > \frac{(1-\theta) \log \veps - \log H}{(n_s-1) \log \mu_+ +
      (n_u-1) \log \lambda_+}.
  \end{equation}
  The second inequality is equivalent to
  \begin{equation}        \label{eq:t+}
    t < \frac{\log H + \theta \log \veps}{2 \log \mu_-}.
  \end{equation}
  There exists $t$ with these properties if and only if we can find
  $0 < \veps < \veps_\ast$ such that
  \begin{displaymath}
    \frac{(1-\theta) \log \veps - \log H}{(n_s-1) \log \mu_+ +
      (n_u-1) \log \lambda_+} < \frac{\log H + \theta \log \veps}{2 \log \mu_-}.
  \end{displaymath}
  Since $H$ is fixed and $\veps$ is small, this is possible
  if
  \begin{displaymath}
    \frac{(1-\theta) \log \veps}{(n_s-1) \log \mu_+ +
      (n_u-1) \log \lambda_+} < \frac{\theta \log \veps}{2 \log \mu_-},
  \end{displaymath}
  which is equivalent to \eqref{eq:mainthm}.
\end{proof}

\noindent

\paragraph{\textbf{Step 3}} 
Choose $0 < \veps < \veps_\ast$ and $t > 0$ such that $\norm{d\xi_t^\veps} < 1$ and
set
\begin{displaymath}
  \xi = \xi_t^\veps.
\end{displaymath}
By Proposition~\ref{prop:forms} there exists a smooth closed 1-form
$\eta$ such that
\begin{displaymath}
  \norm{\xi - \eta} \leq \norm{d\xi} < 1.
\end{displaymath}
Since $\xi(X) = 1$, it follows that $\eta(X) > 0$. This completes the
construction of $\eta$ and concludes the proof of the theorem. \qed


\bibliographystyle{amsalpha} 





\end{document}